\documentclass[12pt]{article}
\title{Zero-free regions inspired by work of Heath-Brown}
\author{Chiara Bellotti,\; Tim Trudgian\footnote{Supported by Australian Research Council Discovery Project DP240100186.} \; and Andrew Yang \\
School of Science, UNSW Canberra, Australia \\
c.bellotti@unsw.edu.au\\ timothy.trudgian@unsw.edu.au \\  andrew.yang1@unsw.edu.au}
\usepackage{indentfirst}
\usepackage{url}
\usepackage{enumerate}
\usepackage{amsthm}
\usepackage{amsmath,mathrsfs}
\usepackage{comment}
\usepackage{fullpage}
\usepackage{amssymb}
\usepackage{booktabs}

\newtheorem{thm}{Theorem}

\newtheorem{lemma}{Lemma}
\newtheorem{definition}{Definition}
\newtheorem{remark}{Remark}

\usepackage[dvipsnames]{xcolor}
\usepackage{todonotes}

\begin{document}
\maketitle
\begin{center}
\noindent\textit{This work we dedicate\\
To dear D.R.H.-B.\\
To help to celebrate\\
His diamond jubilee.}
\end{center}
\begin{abstract}
\noindent
We prove a new explicit zero-free region for the Riemann zeta-function, drawing substantially on Heath-Brown's seminal work on Linnik's constant. Using these ideas we are able to prove that $\zeta(\sigma + it)\ne 0$ whenever $t\geq 3$ and $\sigma \geq 1- 1/(4.896\log t)$. 
\end{abstract}


\section{Introduction}
One of the standard ways to show that $\zeta(s)$, the Riemann zeta-function, is non-zero in a certain region is to use the `zero-detector' arising from its logarithmic derivative. Specifically, for $s = \sigma + it$ and for $\sigma >1$ we have $-\zeta'(s)/\zeta(s) = \sum_{n=1}^{\infty} \Lambda(n)n^{-s}$, where $\Lambda(n)$ is the von Mangoldt function. One manipulates the Dirichlet series and uses the fact that $\zeta(s)$ has a pole at $s=1$ to show that there are no zeroes on the line $\sigma=1$ --- see, e.g., \cite[Chapter 6.1]{MV}. Such ideas can be extended to $\sigma<1$ leading to the `classical' zero-free region. This is the region $\sigma \geq 1 - A/\log t$, for sufficiently large $t$ and for some positive constant $A$, for which $\zeta(\sigma + it)\neq 0$. This was first proved by de la Vall\'{e}e Poussin in 1899: see \cite{HoffTrudgian} for a summary of improvements to the constant $A$. While asymptotically sharper zero-free regions are known (we mention two standard results in (\ref{Ybound}) and (\ref{Bbound})), many applications rely on sharp numerical estimates for the classical zero-free region. This is the focal point of our article. We shall be concerned with showing
\begin{equation}\label{rooster}
\zeta(\sigma + it) \ne 0 \qquad  \textrm{in the region} \qquad  \sigma > 1 - \frac{A}{\log t}, \qquad t\geq 3,
\end{equation}
for a small constant $A$.

Two key ideas have been used to enlarge the classical zero-free region of $\zeta(s)$. The first idea, due to Stechkin \cite{stechkin_zeros_1970}, was to make use of the fact that the complex zeroes of the zeta-function exist in pairs about the critical line. The second idea, due to Heath-Brown \cite{heath_brown_zero_1992}, was to smooth the classical zero-detector using a specific weight function. The zero-detectors they used have the Dirichlet series representations
\[
\sum_{n \ge 1}\frac{\Lambda(n)}{n^s}\left(1 - \frac{\kappa}{n^\delta}\right)\qquad \text{and}\qquad\sum_{n \ge 1}\frac{\Lambda(n)}{n^s}f(\log n),
\]
respectively, where $f$ is a compactly supported, twice-differentiable real-valued function that decays steadily to 0.

Remarkably, the two ideas appear to be largely orthogonal, despite their both being weighted versions of the classical zero-detector $\sum_{n \ge 1}\Lambda(n) n^{-s}$. This is because the weight functions are used in two different ways. 
Stechkin's function attenuates terms in the zero-detector corresponding to small $n$, while Heath-Brown's function attenuates terms corresponding to large $n$. 

One may therefore wonder if the two ideas can be used concurrently. Indeed, Kadiri \cite{Kadiri} showed that this can be done profitably by using the zero detector
\begin{equation}\label{kadiri_test_function_intro}
\sum_{n \ge 1}\frac{\Lambda(n)}{n^s}f(\log n)\left(1 - \frac{\kappa'}{n^{\delta'}}\right)
\end{equation}
for some constants $\kappa', \delta' > 0$, and where $f$ is the function considered by Heath-Brown. Although \cite{Kadiri} contains several  innovations, the combination of smoothing functions explains most of the improvement in (\ref{rooster}), from $A = (8.463)^{-1}$ in \cite{ford_zero_2002} to $A = (5.69693)^{-1}$ in \cite{Kadiri}.

Our goal is to combine these two ideas more efficiently to prove the following.

\begin{thm}\label{intro_new_theorem1}
If $t \ge 3$ then $\zeta(\sigma + it) \ne 0$ in the region $\sigma > 1 - 1/(4.896 \log t).$
\end{thm}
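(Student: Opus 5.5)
We plan to follow the Heath-Brown–Kadiri framework: suppose $\rho_0=\beta_0+i\gamma_0$ is a zero with $\beta_0 = 1-\lambda/\log\gamma_0$. We want to derive a contradiction whenever $\lambda<1/4.896$.

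\textbf{Splitting by height.} For $\gamma_0$ below the height to which the Riemann hypothesis has been verified numerically ($3\cdot 10^{12}$ suffices), there is nothing to prove. For very large $\gamma_0$ we may appeal to the asymptotically stronger regions (\ref{Ybound}) and (\ref{Bbound}), or to Ford's explicit Vinogradov–Korobov region; these cover $\log\gamma_0$ beyond some explicit threshold. Our argument is therefore only needed on an intermediate, though huge, range. On that range it suffices to obtain the inequality asymptotically in $\log\gamma_0$, provided every error term is kept explicit.

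\textbf{The zero-detector.} We take a nonnegative trigonometric polynomial $P(\theta)=\sum_{k=0}^{K}b_k\cos(k\theta)$ with $b_k\ge 0$ and $b_1>b_0$; the classical choice is $8(1+\cos\theta)^2(0.9126+\cos\theta)^2$-type. We form
\[
S=\sum_{k}b_k\sum_{n}\frac{\Lambda(n)}{n^{\sigma+ik\gamma_0}}f(\log n)\Bigl(1-\frac{\kappa}{n^{\delta}}\Bigr),
\]
which has nonnegative real part. Here $f$ is Heath-Brown's smoothing function with support $[0,L]$, where $L\asymp \log\gamma_0$. The key point, and what we expect to improve on Kadiri, is how the Stechkin factor is applied. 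Rather than using a single $(\kappa',\delta')$, we allow $\kappa,\delta$ (or even the weight $f$) to depend on $k$. This puts the Stechkin correction on the $k=0$ pole term and on the $k=1$ term in the most efficient way, using the symmetric zero $1-\bar\rho_0$.

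\textbf{Explicit formula.} Via Mellin inversion, each inner sum becomes a sum over zeros $\rho$ of the Laplace transform $F(s)=\int f(u)e^{-su}\,du$, plus the pole contribution $F(\sigma-1+ik\gamma_0)$ and a Gamma-factor term of size about $\tfrac12 f(0)\log(k\gamma_0)$. Heath-Brown's lemma gives $\operatorname{Re}F(z)\ge 0$ for $\operatorname{Re} z\ge0$. This lets us discard all zeros except $\rho_0$ (at $k=1$) and, via Stechkin, $1-\bar\rho_0$. We then obtain an inequality of the form
\[
b_1 F\bigl(\sigma-\beta_0\bigr)(\ldots) \le b_0 F(\sigma-1)(\ldots)+\tfrac12 f(0)\Bigl(\sum_k b_k\Bigr)\log\gamma_0+O(1),
\]
with explicit constants. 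We rescale by $\log\gamma_0$, setting $\sigma=1+r/\log\gamma_0$. The condition becomes a finite-dimensional inequality in $\lambda$, $r$, $\delta$, $\kappa$, the support parameter of $f$, and the coefficients $b_k$. Optimising numerically over these parameters should show that the inequality fails for $\lambda<1/4.896$.

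\textbf{Main obstacle.} The hardest step is the explicit control of the lower-order terms. These include the Gamma-function terms, the contribution of prime powers and small $n$, and the contribution of zeros near $k\gamma_0$ for $k\ge2$, where positivity may fail once the Stechkin factor is inserted. All of these must be bounded uniformly on the intermediate range so that the asymptotic constant $4.896$ survives. We would first try to choose the parameters so that every discarded term is provably nonnegative. Any remaining $O(1/\log\gamma_0)$ losses would then be absorbed by a slightly better asymptotic constant (say $4.89$), checked numerically at the lower endpoint of the range.
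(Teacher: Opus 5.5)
Your overall architecture (numerical RH below $3\cdot 10^{12}$, an explicit Littlewood region at large height, a Heath-Brown-smoothed Stechkin-type explicit formula in between) matches the paper. However, there is a genuine gap: the step that is supposed to produce $4.896$ is missing, and your substitute for it does not work.

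\textbf{The $k$-dependent Stechkin factor.} Letting $(\kappa,\delta)$ depend on $k$ first breaks the nonnegativity you assert. One has $\Re S=\sum_n\Lambda(n)n^{-\sigma}f(\log n)\sum_k b_k(1-\kappa_k n^{-\delta_k})\cos(k\gamma_0\log n)$, and the inner sum is no longer a multiple of $P$, so its sign is not automatic.

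More importantly, even with positivity restored it gains nothing at leading order. The coefficient of $\log t$ is $\frac{\eta w(0)}{2}\sum_{k\ge1}b_k(1-\kappa_k)$. Since nothing is known about the zeros near height $kt$, each $F_k$ with $k\ge1$ must satisfy the pair condition $\Re F_k(z)+\Re F_k(2\sigma-1-\overline{z})\ge0$. For a single-term factor this means $1-\kappa_k$ cannot be pushed much below $0.55$ (Kadiri's choice, $c\approx1.78$, gives about $0.56$). This is consistent with the single-factor record $5.559$.

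\textbf{The idea you are missing.} The paper replaces $1-\kappa'n^{-\delta'}$ by $\sum_{m=0}^{6}\kappa_m n^{-(2\sigma-1)m}$, where $\sum_m\kappa_m x^m\approx 1/(1+x)$ on $[0,1]$. Then $(1+e^{-(2\sigma-1)u})f(u)\approx\eta w(\eta u)$, and the pair condition reduces, up to the explicit check $B(y)=p(y)/q(y)>0$, to Heath-Brown's $\Re W\ge0$. Meanwhile $f(0)/(\eta w(0))=\sum_m\kappa_m=433/859$ is within one percent of the limit $1/2$. Without such a device, numerical optimisation over your parameters will stay near the existing $\approx 5.5$ level.

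\textbf{The asymptotic reduction.} Your plan to prove an asymptotic inequality with a slightly better constant and absorb $O(1/\log\gamma_0)$ losses is not how $4.896$ is reached, and is probably unavailable. Even with the paper's weight, the $t\to\infty$ bound is $C_1(1)/(a\kappa w(0)/2)\approx 1.006/5.039$, i.e.\ only about $1/5.01$.

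The value $1/4.896$ comes from lower-order terms that are \emph{gains}, not losses:
\begin{itemize}
\item the reflected zero $1-\beta_0+it$ contributes $a_1F(\sigma-1+\beta_0)\approx 6.85\eta$;
\item the digamma constant $-1.568$ and the small-prime bound $0.1186f(0)$ of Lemma~\ref{classical_main_trig_lower_bound_lem} add more;
\item after the $O(\eta)$ losses from the shifted terms $m\ge1$, the net linear gain is about $13.47\eta$, with $\eta\asymp 1/\log t$.
\end{itemize}
These terms raise the bound from about $1.006$ to $1.029$ at $\log t=76.47$. That is exactly where \eqref{Ybound} takes over, so the cut-off height has to be tuned against them rather than treated as an afterthought.

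\textbf{Taking $\sigma<1$.} Heath-Brown's weight only pays off for $\sigma<1$ close to $\beta_0$. The main term $a_1W(1-\mu)-a_0W(-\mu)$, with $\mu=(1-\sigma)/\eta$, rises from about $0.876$ at $\mu=0$ to about $1.006$ at $\mu=1$. Using such $\sigma$ requires $0\le\Re(s_k-\rho)\le 2\sigma-1$ for all zeros up to height $Kt+T_0$. The paper gets this from the bootstrapping in Lemma~\ref{classical_iteration_lem}, starting from a known region with constant $>1/6$; your proposal never addresses it.
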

The constant of $4.896$ in Theorem \ref{intro_new_theorem1} improves on the previous best $5.559$ from \cite{mossinghoff_explicit_2022}.
We prove this first in a restricted range below.
\begin{lemma}\label{classical_main_lem}
If $3 \le t \le \exp(76.47)$ then $\zeta(\sigma + it) \ne 0$ in the region $\sigma > 1 - 1/(4.896 \log t).$
\end{lemma}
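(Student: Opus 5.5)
This lemma covers the range $3\le t\le \exp(76.47)$, and here I would not use the new zero-detector at all. Instead I would combine two known inputs: the numerical verification of the Riemann Hypothesis, and the best existing explicit zero-free regions of Vinogradov--Korobov or Littlewood type.

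\textbf{Step 1: small $t$.} Platt and Trudgian verified that every nontrivial zero $\rho=\beta+i\gamma$ with $0<\gamma\le H_0:=3\cdot 10^{12}$ satisfies $\beta=\tfrac12$. For $3\le t\le H_0$ we have $1-1/(4.896\log t)\ge 1-1/(4.896\log 3)>\tfrac12$. Hence no zero lies in the region there, and the lemma holds trivially in this range.

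\textbf{Step 2: intermediate $t$.} For $H_0<t\le \exp(76.47)$ I would use a published region that is asymptotically stronger than the classical one. One option is the Mossinghoff--Trudgian--Yang region, which gives $5.558691$ in the classical form. That is too weak by itself, but the same authors also prove a Littlewood-type region
\[
\sigma\ge 1-\frac{\log\log t}{21.233\,\log t},
\]
valid for $t\ge 3$. This region contains ours as soon as
\[
\frac{\log\log t}{21.233}\ge \frac{1}{4.896},
\]
that is, once $\log\log t\ge 4.337$, i.e.\ $\log t\ge 76.5$ roughly. The crossover therefore falls almost exactly at the upper end of our range, and that is the wrong side: Littlewood covers large $t$, not the intermediate range. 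So for $\log H_0\approx 28.7\le \log t\le 76.47$ I need something else.

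For this middle range I would instead use the explicit $\sigma\ge 1-1/(5.558691\log t)$ region together with its refined form. Mossinghoff--Trudgian--Yang tabulate admissible constants $R(t_0)$ that decrease with the lower bound $t_0$; their table gives values near $5.5$ at moderate heights. That alone does not reach $4.896$, so the more promising tool is the explicit zero-free region of Ford (or Yang's refinement),
\[
\sigma\ge 1-\frac{1}{57.54\,(\log t)^{2/3}(\log\log t)^{1/3}}.
\]
This contains our region once $57.54(\log t)^{2/3}(\log\log t)^{1/3}\le 4.896\log t$, i.e.\ $(\log t)^{1/3}\ge 11.75(\log\log t)^{1/3}$. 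That only holds for enormous $t$, so it does not help here either.

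\textbf{Step 3: revised plan for $H_0<t\le\exp(76.47)$.} Since none of these quoted results bridges the gap, I would run the paper's own Heath-Brown/Stechkin-smoothed zero-detector argument, with all parameters optimised for $t$ in this bounded range. The main theorem presumably handles $t\ge\exp(76.47)$ by that method combined with the Littlewood-type region; the crossover value $76.5$ computed in Step 2 matches the cutoff in the lemma, which suggests exactly this split. In the bounded range, the argument proceeds as follows.

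1. Take the zero-detector (\ref{kadiri_test_function_intro}) with Heath-Brown's $f$.
2. Apply the nonnegative trigonometric polynomial inequality at $\sigma$, $\sigma+it$ and $\sigma+2it$.
3. Bound the contribution of the pole at $s=1$ and the trivial terms explicitly.
4. Bound the sum over zeros using Stechkin's pairing $\rho$, $1-\bar\rho$.
5. Use explicit bounds on $\zeta$ (for instance $|\zeta(1/2+it)|\ll t^{1/6}\log t$, in explicit form) to control the $\log t$-size terms.

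Because $\log t\le 76.47$, the lower-order terms such as $\log\log t$ and the $O(1)$ terms can be bounded numerically rather than asymptotically. This is what lets the constant drop to $4.896$.

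\textbf{Main obstacle.} The hard part is the numerical optimisation: choosing $f$, $\kappa'$, $\delta'$ and the trigonometric polynomial so that the resulting inequality forces $A\ge 1/4.896$ uniformly across the whole range. I would carry this out on a grid of subintervals of $[\log H_0,76.47]$, verifying rigorously with interval arithmetic on each subinterval.
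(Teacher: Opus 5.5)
Your Step 1 agrees with the paper: Platt--Trudgian disposes of $3\le t<3\cdot10^{12}$. Step 2 correctly explains the cutoff: Yang's Littlewood-type region beats $1/(4.896\log t)$ only from about $\exp(76.463)$ on, and in the paper it handles all larger $t$ on its own. But the whole content of the lemma is the range $3\cdot10^{12}\le t\le\exp(76.47)$, and your Step 3 does not supply an argument that reaches $4.896$ there.

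What you outline is Kadiri's detector with a single Stechkin factor $1-\kappa'n^{-\delta'}$, the classical polynomial at $\sigma,\sigma+it,\sigma+2it$, and numerical tuning of parameters on subintervals. That is precisely the machinery behind Kadiri's $5.69693$ and, with high-degree polynomials and further refinements, the previous record $5.558691$. The obstruction lies in the weight, not in parameter tuning. The quantity being maximised is $(c+O(\eta))(a_1W(0)-a_0W(-1))/(\eta w(0))$. The paper computes $c\approx1.78$ for Kadiri's weight, against $c=859/433\approx1.98$ for the new weight and the theoretical ceiling $c=2$. Your item~5 is also beside the point: no bound for $|\zeta(\tfrac12+it)|$ is used. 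The $\log t$ term is $\tfrac{a\kappa}{2}f(0)\log t$, and it comes from $\Re\,\Gamma'/\Gamma$ in the explicit formula (Lemma~\ref{digamma_lem}).

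The missing idea is to choose the weight so that Stechkin's averaged positivity condition $\Re F(z)+\Re F(2\sigma-1-\overline{z})\ge0$ costs almost nothing. Take $h(u)=1/(1+e^{-(2\sigma-1)u})$ and $f(u)=\eta w(\eta u)h(u)$. Then $(1+e^{-(2\sigma-1)u})f(u)=\eta w(\eta u)$, so on $\Re z=0$ the condition is exactly Heath-Brown's $\Re W\ge0$, while $f(0)$ is halved. This $f$ lacks the decay $F(z)=f(0)/z+O(|z|^{-2})$, so the paper approximates $1/(1+x)$ on $[0,1]$ by the degree-$6$ polynomial $\sum_{m\le6}\kappa_mx^m$ of \eqref{kappa_delta_defn}.

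The proof must then be built around this weight:
\begin{enumerate}
\item positivity on $\Re z=0$ via the explicit rational function $B(y)=p(y)/q(y)>0$, plus the maximum principle on $0\le\Re z\le2\sigma-1$ (Lemma~\ref{f_one_sign_lem});
\item a bootstrap (Lemma~\ref{classical_iteration_lem}) starting from any known region with constant $>1/6$, in which $\sigma=1-A/\log(Kt+T_0)$ lies \emph{inside} the strip; this is legitimate only because the inductive hypothesis gives $0\le\Re(s_k-\rho)\le2\sigma-1$ for $|\gamma|\le Kt+T_0$;
\item the degree-$16$ polynomial with $a_1/a_0\approx1.741$;
\item the lower bound $0.1186f(0)$ obtained from primes below $100$;
\item the final inequality $\eta\log t>0.204248$.
\end{enumerate}

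Your sense that the bounded range helps is correct but secondary. The final lower bound decreases in $t$ towards roughly $1/5.01$, so the restriction $\log t\le76.47$ only buys the last step from about $5.01$ to $4.896$. The bulk of the gain over $5.558691$ comes from the new weight.
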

We now show how Theorem \ref{intro_new_theorem1} follows from Lemma \ref{classical_main_lem}. First we quote two zero-free regions that appear to be the strongest in the literature in various ranges. The third author \cite{Yang} proved an explicit version of Littlewood's zero-free region
\begin{equation}\label{Ybound}
\zeta(\sigma + it) \ne 0 \qquad  \textrm{in the region} \qquad  \sigma > 1 - \frac{\log\log t}{21.233\log t}, \qquad t\geq 3.
\end{equation}
The first author \cite{BZero} proved an explicit version of the Vinogradov--Korobov zero-free region
\begin{equation}\label{Bbound}
\zeta(\sigma + it) \ne 0 \qquad  \textrm{in the region} \qquad  \sigma > 1 - \frac{1}{53.989(\log t)^{2/3} (\log\log t)^{1/3}},\qquad t\geq 3.
\end{equation}
We note that (\ref{Bbound}) is sharper than (\ref{Ybound}) for $t> \exp(482036)$. For the proof of Theorem \ref{intro_new_theorem1}, however, the relevant point is that the explicit Littlewood zero-free region (\ref{Ybound}) is stronger than our Theorem \ref{intro_new_theorem1} for $t> \exp(76.463)$. Since Lemma \ref{classical_main_lem} establishes Theorem \ref{intro_new_theorem1} for $2 \leq t \leq \exp (76.47)$, the zero-free region (\ref{Ybound}) covers the remaining range $t>\exp (76.463)$, and hence Theorem \ref{intro_new_theorem1}  follows.

The constants in (\ref{Bbound}) and (\ref{Ybound}) were improved in the third author's thesis \cite{Yangthesis}, where $21.233$ and $53.989$ are replaced by $19.62$ and $51.34$, respectively. In particular, the improved Littlewood zero-free region would yield the following stronger classical zero-free region.
\begin{thm}\label{intro_new_theorem2}
If $t \ge 3$ then $\zeta(\sigma + it) \ne 0$ in the region $\sigma > 1 - 1/(4.8594 \log t).$
\end{thm}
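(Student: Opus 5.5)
Theorem \ref{intro_new_theorem2} follows the same splitting as the deduction of Theorem \ref{intro_new_theorem1} from Lemma \ref{classical_main_lem}. Here we use the sharper Littlewood region from \cite{Yangthesis}, namely
\[
\zeta(\sigma+it)\neq 0 \quad\text{for}\quad \sigma > 1-\frac{\log\log t}{19.62\log t},\qquad t\ge 3.
\]
This region contains the claimed one as soon as
\[
\frac{\log\log t}{19.62} \ge \frac{1}{4.8594}, \qquad\text{that is,}\qquad \log\log t \ge \frac{19.62}{4.8594}\approx 4.0375.
\]
This holds for $\log t \ge e^{4.0375}\approx 56.67$. So the large range $t\ge \exp(56.7)$, say, is handled by the improved Littlewood bound. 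Checking this threshold carefully, with rounding, is a routine numerical computation.

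For the range $3\le t\le \exp(56.7)$ we need an analogue of Lemma \ref{classical_main_lem} with $4.896$ replaced by $4.8594$ and $\exp(76.47)$ replaced by $\exp(56.7)$. I would prove it by rerunning the machinery behind Lemma \ref{classical_main_lem}: the combined Heath-Brown/Stechkin zero-detector
\[
\sum_{n\ge1}\frac{\Lambda(n)}{n^s}f(\log n)\Bigl(1-\frac{\kappa}{n^{\delta}}\Bigr),
\]
applied with the usual nonnegative trigonometric polynomial at $\sigma$, $\sigma+it$, $\sigma+2it$. The explicit error terms depend on $t$ mainly through lower-order terms in $\log t$, such as the contributions of $\Gamma'/\Gamma$, bounds for $\zeta$ near $\sigma=1$, and the smoothing remainder. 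These are monotone in $t$, so shrinking the upper endpoint from $\exp(76.47)$ to $\exp(56.7)$ lets the constant improve. I would reoptimise the parameters (the support and shape of $f$, and $\kappa,\delta$) for the worst case $t=\exp(56.7)$.

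In the small range $t\le H$, with $H\approx 3\cdot 10^{12}$ the height to which the Riemann hypothesis has been verified, nothing is needed. There all zeros have $\sigma=1/2<1-1/(4.8594\log t)$, since $\log 3>1/(2\cdot 4.8594)$.

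The main obstacle is the middle range. One must check that the explicit inequality from the zero-detector still closes with constant $4.8594$ at the top endpoint $t=\exp(56.7)$, and that it closes uniformly down to $t=H$. If the inequality fails near the top endpoint, I would first try to move the crossover point by subdividing the interval and using different optimised parameters on each piece. Only if that also fails would I try a slightly larger crossover.
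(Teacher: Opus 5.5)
Your top-level structure matches the paper: zeros below $H$ lie on the critical line, Lemma~\ref{classical_main_lem2} covers $3\le t\le\exp(56.693)$, and the improved Littlewood region (constant $19.62$) takes over for $t>\exp(56.691)$.

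\textbf{The gap: the middle-range method you describe cannot reach $4.8594$.} This range carries all the content. The detector $\sum\Lambda(n)n^{-s}f(\log n)(1-\kappa n^{-\delta})$, with a single Stechkin factor and $3+4\cos\theta+\cos2\theta$, is essentially Kadiri's setup. That setup gave $5.69693$, and with the degree-two polynomial it is weaker still.

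The margins show why this cannot be repaired. Even with the paper's full apparatus, the final inequality in Lemma~\ref{classical_iteration_lem} gives, as $t\to\infty$, only $\eta\log t\gtrsim C_1(1)/(a\kappa w(0)/2)\approx1.0058/5.039\approx1/5.01$. The value $1/4.8594$ is reached only through the positive $O(1/\log t)$ terms at $\log t\approx56.69$. Dropping the weight's efficiency from $859/433\approx1.98$ to about $1.78$ costs roughly $10\%$ of the main term, and that cannot be absorbed.

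What you need is exactly the machinery of Sections~\ref{Choice}--\ref{castle}:
\begin{itemize}
\item the seven-term weight of Definition~\ref{wfF_defn}, approximating $1/(1+e^{-(2\sigma-1)u})$, with its averaged positivity Lemma~\ref{f_one_sign_lem};
\item the degree-$16$ polynomial;
\item the small-prime lower bound of Lemma~\ref{classical_main_trig_lower_bound_lem};
\item the iteration Lemma~\ref{classical_iteration_lem}, seeded by a known region with constant $>1/6$. The seed is what lets $\sigma<1$ and keeps $0\le\Re(s_k-\rho)\le2\sigma-1$.
\end{itemize}
No reoptimisation is needed. The paper keeps $\theta$, the $\kappa_m$ and $P$ fixed, and reruns Section~\ref{castle} with the second column of the table. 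There $A_0=1/4.8594$, and $\mu_0=\log H/\log(KH+T_0)$ is unchanged. It then evaluates the decreasing lower bound $1.00582+1.86088/L-4.4106/L^2-55.0584/L^3$ at $L=56.693$ instead of $76.47$. This gives about $1.03697/5.039\approx0.20579\approx1/4.8594$.

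\textbf{Your crossover point is also misplaced.} The Littlewood threshold is $e^{19.62/4.8594}\approx e^{4.0375}\approx56.687$, not $56.67$. At $L=56.693$ the middle-range bound already equals $1/4.8594$ to about six digits, so the two regions barely overlap. The paper's overlap is only $56.691<\log t\le56.693$, and $\exp(56.7)$ is not covered by its numerics.

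Your fallback of a \emph{larger} crossover goes the wrong way. The lower bound decreases in $\log t$, so you would need to move the crossover down toward $\exp(56.687)$. Subdividing the range does not help either, since only the top endpoint is binding.
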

As before, this theorem would from a result restricted to a certain range.
\begin{lemma}\label{classical_main_lem2}
If $3 \le t \le \exp(56.693)$ then $\zeta(\sigma + it) \ne 0$ in the region $\sigma > 1 - 1/(4.8594 \log t).$
\end{lemma}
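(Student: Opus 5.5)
We split the range in two. For small $t$, say $3\le t\le T_0$ with $T_0$ of size about $3\cdot 10^{12}$, we use the numerical verification of the Riemann hypothesis (Platt--Trudgian) up to that height. This places every zero with $|\gamma|\le T_0$ on $\sigma=1/2$, and $1/2<1-1/(4.8594\log t)$ for all $t\ge3$. So the real work is in the range $T_0<t\le\exp(56.693)$, and there we argue by contradiction.

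Let $\rho_0=\beta_0+it_0$ be a zero with $\beta_0>1-1/(4.8594\log t_0)$. Following Kadiri, we use a Stechkin--Heath-Brown hybrid detector. Take $f$ to be Heath-Brown's compactly supported, twice-differentiable weight, with Laplace transform $F(z)=\int_0^\infty f(u)e^{-zu}\,du$. Combine it with the Stechkin factor $(1-\kappa n^{-\delta})$, which uses the symmetry $\rho\leftrightarrow 1-\bar\rho$. Insert this weight into the nonnegative trigonometric polynomial $P(\theta)=\sum_{k}a_k\cos(k\theta)$ with $a_k\ge0$ (e.g.\ degree $4$, as in Mossinghoff--Trudgian). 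This gives
\[
0\le \sum_{k}a_k\,\mathrm{Re}\sum_{n}\frac{\Lambda(n)}{n^{\sigma+ikt_0}}f(\log n)\Bigl(1-\frac{\kappa}{n^{\delta}}\Bigr).
\]
By Mellin inversion and the explicit formula, each inner sum equals a pole term $F(s-1)$, minus $\sum_\rho F(s-\rho)$, plus gamma-factor and trivial-zero contributions.

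To make the argument work we need positivity of $\mathrm{Re}\,F(z)$ for $\mathrm{Re}\,z\ge0$, which is Heath-Brown's key property. With that, every zero except $\rho_0$ (and its reflection $1-\bar\rho_0$) can be dropped in the $k=1$ term. In the $k\ge2$ terms all zeros can be discarded, up to the Stechkin correction. The pole contributions near $k=0$ are evaluated exactly. The archimedean terms are bounded explicitly using $\log(kt_0)$ together with Stirling-type bounds for $\Gamma'/\Gamma$.

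Choosing $\sigma=1+r/\log t_0$ and scaling $f$ by $\log t_0$ makes everything homogeneous in $\log t_0$, up to lower-order terms of size $O(1/\log t_0)$ relative to the main ones. The inequality then becomes a contradiction once $(1-\beta_0)\log t_0<1/4.8594$. The lower-order terms are negligible near $T_0$ but must be bounded carefully. Because they shrink as $t_0$ grows, the verification should be strongest at the bottom of the range, and it suffices to check finitely many subintervals of $[\log T_0, 56.693]$ in $\log t_0$.

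The main obstacle is the numerical optimisation of the parameters: the coefficients $a_k$, Heath-Brown's parameter $\theta$ in $f$, the scaling of the support, $r$, $\kappa$ and $\delta$, possibly allowed to vary across subintervals. These must be tuned so that the leading-order quantity, namely (pole contribution minus the contribution of $\rho_0$ and its reflection), remains positive down to the constant $4.8594$. At the same time the secondary error terms must stay controlled up to $\exp(56.693)$. I would first fix the Mossinghoff--Trudgian polynomial and Kadiri's form of $f$ and optimise only the continuous parameters by computer. Only if the margin is insufficient would I refine $P$. Finally, the upper endpoint $\exp(56.693)$ is exactly where the improved Littlewood region with constant $19.62$ takes over, so it has to be checked numerically that the inequality still closes there.
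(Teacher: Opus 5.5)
Your treatment of $t<H=3\cdot10^{12}$ via Platt--Trudgian matches the paper. The analytic part, however, has a genuine gap. What you propose is the Kadiri framework: Heath-Brown's weight times a single Stechkin factor $1-\kappa n^{-\delta}$, a fixed nonnegative polynomial, and continuous parameters tuned by computer. That framework has already been optimised in the literature, giving $5.69693$ (Kadiri) and then $5.559$ (Mossinghoff--Trudgian--Yang). Re-tuning $\theta,\kappa,\delta,r$ or $P$ will not reach $4.8594$, because the limitation is structural.

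Once a Stechkin factor is present, Heath-Brown's pointwise condition $\Re F(z)\ge0$ fails for the full transform. For $F(z)=W(z/\eta)-\kappa W((z+\delta)/\eta)$ one has $\Re F(iy)\sim-\kappa\eta w(0)\delta/y^2<0$ as $|y|\to\infty$. What you can actually use is the paired condition $\Re F(z)+\Re F(2\sigma-1-\overline{z})\ge0$ for $0\le\Re z\le2\sigma-1$, applied to $\rho$ and $1-\overline{\rho}$ together. The figure of merit is $(a_1F(0)-a_0F(-\eta))/f(0)$.
\begin{itemize}
\item Under the paired condition it can approach $2$ times Heath-Brown's value. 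This is formally attained by the weight $1/(1+e^{-(2\sigma-1)u})$, where $(1+e^{-(2\sigma-1)u})f(u)=\eta w(\eta u)$ and positivity follows from $\Re W\ge0$ with no loss.
\item With a single Stechkin term it is only about $1.78$ times Heath-Brown's value.
\end{itemize}
The missing idea is the paper's weight $f(u)=\eta w(\eta u)\sum_{m=0}^{6}\kappa_m e^{-(2\sigma-1)mu}$, with $\sum_m\kappa_m x^m\approx1/(1+x)$ on $[0,1]$. Because the exponents are tied to $2\sigma-1$, one gets $(1+x)\sum_m\kappa_mx^m=1+\sum_{m\ge1}b_mx^m$ with small $b_m$. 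Paired positivity then reduces to $\Re W\ge0$ plus positivity of an explicit rational function $B(y)$, and the efficiency becomes $1/\kappa=859/433\approx1.98$. That ratio of about $1.11$ is roughly the difference between a constant near $5.4$--$5.6$ and $4.86$.

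Two further ingredients are also missing from your plan.

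\textbf{Choice of $\sigma$.} The paired condition only applies when $0\le\Re(s_k-\rho)\le2\sigma-1$. The paper therefore takes $\sigma=1-A/\log(Kt+T_0)<1$ and assumes a zero-free region $\beta\le1-A/\log|\gamma|$ for $|\gamma|\le Kt+T_0$. Higher zeros are handled by the separate bound $\Re F(z)+\Re F(2\sigma-1-\overline{z})>-44\eta/|\Im z|^2$. It then bootstraps $A\mapsto A+10^{-100}$, starting from a known region with $A>1/6$. Your choice $\sigma=1+r/\log t_0$ without such an iteration either forfeits this gain ($r>0$) or leaves the positivity step unjustified ($r<0$).

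\textbf{Secondary terms.} Even with the right $f$, the leading-order inequality in the paper only gives a constant of about $5.01$; this is its limit as $t\to\infty$. So the lower-order terms are not negligible: reaching $4.8594$ needs a positive secondary term of about $1.86/\log t$. It is assembled from:
\begin{itemize}
\item the degree-$16$ polynomial;
\item the constant $-1.568$ in the archimedean bound;
\item the $\eta$-expansion of $F(\sigma-\eta)$ and $F(\sigma-1+\eta)$;
\item the new bound $\sum_n\Lambda(n)n^{-\sigma}f(\log n)P(t\log n)\ge0.1186f(0)$, obtained by grouping the powers of each prime $p<100$ before using $P\ge0$. Without this bound the argument stalls near $4.87$.
\end{itemize}
Since this secondary term decays in $t$, the binding case is $t=\exp(56.693)$. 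That is why the lemma is range-restricted and is spliced onto the Littlewood region there.
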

Indeed, the improved Littlewood zero-free region is sharper than the bound claimed in Theorem \ref{intro_new_theorem2} for every $t> \exp (56.691)$. Therefore, once Lemma \ref{classical_main_lem2} is established in the range $3 \leq t \leq \exp (56.693)$, the improved Littlewood zero-free region covers the remaining range of $t$, and Theorem \ref{intro_new_theorem2} would follow.

The outline of this paper is as follows. In \S \ref{moat} we motivate the choice of our new smoothing function $f$. In \S \ref{Choice} we select our function $f$. In \S \ref{sec:trig_poly} we select an appropriate trigonometric polynomial to enhance the size of the zero-free region. Finally, in \S \ref{castle} we prove Lemma \ref{classical_main_lem}. 

\section{Reasons for choosing $f$}\label{moat}
We now motivate our choice of a smoothing function $f$, which leads to our improved region in Theorem \ref{intro_new_theorem1}. We draw heavy inspiration from the argument of Heath-Brown \cite{heath_brown_zero_1992}. 

Since non-trivial zeroes are symmetric about the critical line and the real axis, we focus on the quadrant $\{z \in \mathbb{C}: \Re\,z \ge 1/2, \Im\,z \ge 0\}$. In fact, we can further restrict to 
\begin{equation}\label{classical_H_defn}
\Im\,z \ge H := 3\cdot 10^{12}
\end{equation}
since all zeroes $\rho = \beta + i\gamma$ with $0 < \gamma \le H$ have been computationally verified to lie on the critical line \cite{platt_riemann_2021}. The non-trivial zeroes at heights just above $H$ pose the greatest difficulty. 

Suppose $f$ is a non-negative, bounded and compactly supported function with a well-defined Laplace transform $F(z) := \int_0^{\infty}e^{-zu}f(u)\text{d}u.$
Under suitable assumptions, the associated zero-detector has an explicit formula
\begin{equation}\label{rough_1}
\Re \sum_{n \ge 1}\frac{\Lambda(n)}{n^s}f(\log n) = \frac{f(0)}{2}\Re\frac{\Gamma'}{\Gamma}\left(\frac{s}{2} + 1\right) + \Re F(s - 1) 
- \sum_{\rho}\Re F(s - \rho) + o(1),
\end{equation}
where $s = \sigma + it$ and the error term is taken with respect to $t \to \infty$. This formula may be viewed as a generalisation of Hadamard's product formula used in the classical proof, with an important distinction that the formula remains valid for $s$ \textit{inside} the critical strip. 

It turns out that moving $s$ inside the critical strip is accretive to the size of the zero-free region, provided that $s$ itself resides in a zero-free region. This inspires an interesting inductive argument: one begins with a known zero-free region, then iteratively\footnote{As usual, there is no free lunch --- eventually the marginal progress made with each iteration tends to 0 and the size of the zero-free region stabilises.} expands it by taking $s$  deeper inside the critical strip as permitted by the new zero-free region at each stage. This idea was introduced in \cite{Kadiri} and has since become standard in the literature. 

Specifically, suppose that there exists a simple\footnote{Since  zeroes in the sum \eqref{rough_1} are counted with multiplicity, if $\rho_0$ has multiplicity $m$ then it is included $m$ times, and we arbitrarily rename one of them $\rho_0$. The remaining $m - 1$ terms are treated the same way as the other zeroes $\rho \ne \rho_0$. }, complex zero $\rho_0 = \beta_0 + it$ lying just outside a known zero-free region $\sigma > 1 - A/\log t$ for some constant $A > 0$. We seek to show that $\rho_0$ actually satisfies a stronger inequality, of the form $\beta_0 \le 1 - (A + \varepsilon)/\log t$ say, for some $\varepsilon > 0$. Assume for a contradiction that
\begin{equation}\label{rough_assumption_beta}
A \le (1 - \beta_0)\log t < A + \varepsilon.
\end{equation}
We consider a non-negative trigonometric polynomial 
$P(\theta) = \sum_{0 \le k \le K}a_k \cos(k\theta)$,
satisfying $P(\theta) \ge 0$, $a_k > 0$ and $a_1 > a_0$, which generalises the polynomial $3 + 4\cos\theta + \cos2\theta$ used in the classical proof --- see, e.g., \cite[Chapter 3]{Titchmarsh}. The existence and construction of such polynomials is non-trivial, and we refer the reader to \cite{HoffTrudgian} for an account. Combined with \eqref{rough_1} evaluated at $s = s_k := \sigma + ikt$ for $0 \le k \le K$, we obtain
\begin{equation}\label{rough_main_ineq}
\begin{split}
0 &\le \sum_{n \ge 1}\frac{\Lambda(n)}{n^\sigma}f(\log n)P(t\log n) = \Re\sum_{0 \le k \le K}a_k \sum_{n \ge 1}\frac{\Lambda(n)}{n^{s_k}}f(\log n)\\
&= \Re\sum_{0 \le k \le K}a_k \left(\frac{f(0)}{2}\frac{\Gamma'}{\Gamma}\left(\frac{s_k}{2} + 1\right) + F(s_k - 1) - \sum_{\rho}F(s_k - \rho)\right) + o(1).
\end{split}
\end{equation}
Using classical estimates, as $t \to\infty$,
\[
\Re\sum_{0 \le k \le K}a_k \left(\frac{f(0)}{2}\frac{\Gamma'}{\Gamma}\left(\frac{s_k}{2} + 1\right) + F(s_k - 1)\right) < c f(0) \log t + a_0 F(\sigma - 1),
\]
for an absolute constant $c > 0$. To bound the sum over zeroes we  assume the following positivity condition: for any non-trivial zero $\rho$,
\begin{equation}\label{one_signedness_condition}
\Re F(s - \rho) + \Re F(s - (1 - \overline{\rho})) \ge 0.
\end{equation}
Note that both $\rho$ and $1 - \overline{\rho}$ are (possibly distinct) complex zeroes, so \eqref{one_signedness_condition} may be viewed as a slightly ``averaged" version of the pointwise condition $\Re F(s - \rho) \ge 0$ considered in Heath-Brown \cite{heath_brown_zero_1992}. Condition \eqref{one_signedness_condition} is also satisfied by Kadiri's \cite{Kadiri} smoothing function: it is here that the ideas of Stechkin and Heath-Brown are combined. This condition allows us to isolate the contribution of just one pair of zeroes, say $\beta_0 + it$ and $1 - \beta_0 + it$, since 
\[
\Re\sum_{\rho}F(s_1 - \rho) \ge F(\sigma - \beta_0) + F(\sigma - 1 + \beta_0)
\]
and $
\Re\sum_{\rho}F(s_k - \rho) \ge 0$ when $k \ne 1$.
The $F(\sigma - 1 + \beta_0)$ term is $o(1)$, so combining these estimates with \eqref{rough_main_ineq}, one has 
\begin{equation}\label{classical_main_ineq_5}
c\log t > \frac{a_1 F(\sigma - \beta_0) - a_0 F(\sigma - 1) + o(1)}{f(0)}.
\end{equation}
Next we choose $\sigma$ and $f$ so that the right side is $\gg 1/\eta$, where $
\eta := 1 - \beta_0.$
This can be achieved, for instance, if one takes $\sigma = 1 - (A + o(1))/\log t$ and $f(u) = \eta g(\eta u)$ for some fixed function $g$. With these choices the right side of \eqref{classical_main_ineq_5} is
\[
\frac{a_1 G(1 - \mu) - a_0 G(-\mu)}{g(0)} \eta^{-1}
\]
where $G$ is the Laplace transform of $g$ and $\mu = (1 - \sigma)/\eta = 1 + O(\varepsilon)$. Taking $\varepsilon \to 0$, the coefficient of $\eta^{-1}$ is a constant, as required. Therefore, one arrives at a contradiction to \eqref{rough_assumption_beta} if $A$ is small enough, which completes the argument.

Let us now consider the problem of choosing the best $f$ for each $\eta > 0$ (we now regard $\eta$ as fixed). First, we review the constraints one needs to place on $f$. To control the various $o(1)$ error terms we require $F(z) = f(0)/z + O(|z|^{-2})$ as $|z| \to \infty$. This condition is surprisingly mild\footnote{In fact, our choice of $f$ satisfies $F(z) = f(0)/z + O(|z|^{-3})$. The improved error was also noted in \cite{Kadiri}.} --- in \cite{heath_brown_zero_1992} this was enforced by requiring $f$ to be compactly supported and having a bounded and continuous second derivative. The second, more important condition comes from \eqref{one_signedness_condition} --- for this it suffices to require that $\Re F(z) + \Re F(2\sigma - 1 - \overline{z}) \ge 0$ for all $\Re\,z \ge 0$. 

Taking $\varepsilon \to 0$ and ignoring $o(1)$ terms, from \eqref{classical_main_ineq_5} the optimisation problem we are led to consider is (for each fixed $\eta$)
\begin{equation}\label{classical_optimization_problem}
\max_{f} \frac{a_1 F(0) - a_0 F(-\eta)}{f(0)}
\end{equation}
subject to $F(z) = f(0)/z + O(|z|^{-2})$ and
\begin{equation}\label{classical_one_signedness_g}
\Re F(z) + \Re F(2\sigma - 1 - \overline{z}) \ge 0\qquad(\Re\,z \ge 0).
\end{equation}
In \cite{heath_brown_zero_1992}, Heath-Brown studied a similar optimisation problem, with the constraint $\Re F(z) \ge 0$ in place of \eqref{classical_one_signedness_g}. In the next section we explore how solutions to Heath-Brown's optimisation problem can be adapted to solve \eqref{classical_optimization_problem}.

\section{The test function}\label{Choice}
In this section we choose the test function $f$. 
First, suppose that $w$ is a twice-differentiable, compactly-supported, real-valued function. Let $W(z)$ denote its Laplace transform, which satisfies $
\Re W(z) \ge 0$ when $\Re\,z \ge 0$.
Examples of such functions were studied by Heath-Brown~\cite{heath_brown_zero_1992}. One particularly relevant function is \cite[Lemma 7.1]{heath_brown_zero_1992} with $\lambda = 1$, given by:
\begin{definition}[The functions $w$ and $W$]\label{classical_wW_defn}
For fixed $\theta \in (0, \pi/2)$, let $w$ be defined as 
\begin{align*}
w(u) &:= \sec^2\theta(\sec^2\theta(\theta\cot\theta - u/2)\cos(u\tan\theta) + 2\theta\cot\theta - u \\
&\qquad\qquad + \sin(2\theta - u\tan\theta) \csc 2\theta - 2(1 + \sin(\theta - u\tan\theta)\csc\theta)),
\end{align*}
for $0 \le u \le 2\theta\cot\theta$, and $w(u) := 0$ elsewhere. Let $W(z) := \int_0^\infty e^{-zu} w(u)\text{d}u$ denote the Laplace transform of $w$.
\end{definition}
In our notation, one choice of $f$ considered by Heath-Brown was 
\begin{equation}\label{test_f_heathbrown}
\eta w(\eta u).
\end{equation}
In \cite{heath_brown_zero_1992}, it was shown via a calculus-of-variations argument that this choice of $f$ solves, within a large family of functions, the optimisation problem
\[
\max_f \frac{a_1 F(0) - a_0 F(-\eta)}{f(0)}\qquad\text{subject to}\qquad \Re F(z) \ge 0 \quad (\Re\,z \ge 0).
\]
Consider now the function
\begin{equation}
\label{f_hypothetical_defn}
f(u) = \eta w(\eta u)h(u),\qquad h(u) := \frac{1}{1 + e^{-(2\sigma - 1)u}}.
\end{equation}
This choice of $f$ satisfies \eqref{classical_one_signedness_g}, since for any $z = iy$ ($y \in \mathbb{R}$),
\begin{align}
\Re F(z) + \Re F(2\sigma - 1 - \overline{z}) &= \Re \int_0^{\infty}e^{-iyu}(1 + e^{-(2\sigma - 1)u})f(u)\text{d}u \label{hypothetical_W_ineq_1}\\
&= \Re\int_0^{\infty}e^{-iyu/\eta}w(u)\text{d}u \label{hypothetical_W_ineq_2}\\
&= \Re W(iy/\eta)\ge 0.\notag
\end{align}
We can then extend this inequality to hold for all $\Re\,z \ge 0$ via a result of Heath-Brown \cite[Lemma 4.1]{heath_brown_zero_1992}.
In particular, one might expect that this choice of weight function $h$ is optimal because there is no ``loss" in passing from \eqref{hypothetical_W_ineq_1} to \eqref{hypothetical_W_ineq_2}.

Unfortunately, $f$ does not satisfy the condition $F(z) = f(0)/z + O(|z|^{-2})$. Furthermore, the inverse Laplace transform of $h$ does not converge, which makes analysis difficult. For these reasons, we will instead use a discrete approximation of $h(u)$ which produces numerically similar results, and which possesses the desired properties. Roughly speaking, we choose parameters $\kappa_m$ ($1 \le m \le M$) so that uniformly for $x \in [0, 1]$, one has
\begin{equation}\label{kappa_m_inspiration}
\sum_{0 \le m \le M} \kappa_m x^m \approx \frac{1}{1 + x}.
\end{equation}
One may be inclined to find the coefficients $\kappa_m$ in \eqref{kappa_m_inspiration} using a Taylor expansion at say $x = 1/2$; however, this leads to approximations that are catastrophically bad at, e.g., $x = 0$. Instead, we search for favourable coefficients computationally. As a first pass we choose $\kappa_m$ to minimise the $L^\infty$ norm 
\[
\max_{0 \le x \le 1}\bigg|\frac{1}{1 + x} - \sum_{0 \le m\le M}\kappa_m x^m \bigg|.
\]
We perturb the coefficients so as to make the arguments that follow simpler (in particular, we opt for slightly less favourable coefficients that can be more easily analysed, and whose rational approximations are ``simple" to aid our presentation). We take $M = 6$ and
\begin{equation}\label{kappa_delta_defn}
\begin{split}
[\kappa_m]_{0 \le m \le M} &= 
\left[1, \; -\frac{851}{859},\; \frac{780}{859},\; -\frac{525}{859},\; \frac{171}{859},\; \frac{28}{859},\; -\frac{29}{859}\right].
\end{split}
\end{equation}
Our choice of $f$ is then given by 
\begin{definition}[The functions $f$ and $F$]\label{wfF_defn}
For fixed $\eta > 0$ and $1/2 < \sigma < 1$, define
\begin{equation}\label{f_defn_eqn}
f(u) := \eta w(\eta u) \sum_{0 \le m \le M}\kappa_m e^{-(2\sigma - 1)mu}
\end{equation}
where $M$ and $\kappa_m$ are fixed parameters defined in \eqref{kappa_delta_defn}. Let $F(z) := \int_0^\infty e^{-zu} f(u)\text{d}u$ denote Laplace transform of $f$.
\end{definition}
In comparison, Kadiri's \cite{Kadiri} test function \eqref{kadiri_test_function_intro} may be expressed in our notation as 
\begin{equation}\label{test_f_kadiri}
\eta w(\eta u)\sum_{0 \le m \le 1}\kappa_m e^{-\delta_m u},
\end{equation}
with $\kappa_0 = 1$, $\delta_0 = 0$, $\kappa_1 = \kappa'$ and $\delta_1 = \delta'$. 
One can measure the ``efficiency" of the test functions \eqref{test_f_heathbrown}, \eqref{f_hypothetical_defn}, \eqref{f_defn_eqn} and \eqref{test_f_kadiri} by evaluating \eqref{classical_optimization_problem}, which in each case gives
\[
\frac{a_1 F(0) - a_0 F(-\eta)}{f(0)} = (c + O(\eta))\frac{a_1 W(0) - a_0 W(-1)}{\eta w(0)},
\]
where $c = 859/433 = 1.98\ldots$ in the case of \eqref{f_defn_eqn}, $c = 2$ for the hypothetical function \eqref{f_hypothetical_defn}, $c = 1.78\ldots$ for \eqref{test_f_kadiri} and $c = 1$ for \eqref{test_f_heathbrown}. In this sense, the value attained by our discrete approximation is within one percent of the hypothetical limit of the method.
Let us now record some properties of these functions.
\begin{lemma}
One has $f(u) \ge 0$ for all $u \ge 0$. 
\end{lemma}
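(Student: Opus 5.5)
We need $f(u)=\eta w(\eta u)\,p(e^{-(2\sigma-1)u})\ge 0$ for $u\ge 0$, where
\[
p(x):=\sum_{0\le m\le 6}\kappa_m x^m .
\]
Since $1/2<\sigma<1$, the quantity $x=e^{-(2\sigma-1)u}$ ranges over $(0,1]$ as $u$ ranges over $[0,\infty)$. The proof therefore splits into two independent claims:
\begin{enumerate}[(i)]
\item $w(u)\ge 0$ for all $u\ge 0$;
\item $p(x)\ge 0$ for $x\in[0,1]$.
\end{enumerate}
Given both, $f(u)\ge 0$ follows immediately because $\eta>0$.

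For (i), I would rely on Heath-Brown's construction \cite[Lemma 7.1]{heath_brown_zero_1992}, where $w$ is non-negative on its support. To make this self-contained, I would verify it directly.

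\begin{itemize}
\item Set $a=\tan\theta$ and $L=2\theta\cot\theta$. Outside $[0,L]$ we have $w=0$, so only the support matters.
\item At the endpoint, $aL=2\theta$. Substituting gives $w(L)=0$, since $\sin(2\theta-2\theta)=0$ and $1+\sin(\theta-2\theta)\csc\theta=0$. Differentiating shows $w'(L)=0$ as well.
\item The natural route is to compute $w''$ and check that it has a sign on $[0,L]$ for which, together with $w(L)=w'(L)=0$, it forces $w\ge 0$. Concretely, the $\cos(ua)$, $\sin(2\theta-ua)$ and $\sin(\theta-ua)$ terms produce a combination of the form
\[
\sec^2\theta\,(\theta\cot\theta-u/2)\,a^2\cos(ua)+\ldots
\]
after differentiating.
\item The structural alternative, which I expect is how $w$ arises, is to exhibit $w$ as an autocorrelation $w=g*\tilde g$ of a non-negative function $g$ supported on $[0,L/2]$, for instance $g(u)=\cos(ua)-\cos\theta$ up to constants. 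This would make non-negativity automatic.
\end{itemize}

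I would try the convolution identity first. If matching constants is messy, I would fall back on the second-derivative argument.

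For (ii), $p$ is an explicit degree-$6$ polynomial with rational coefficients. Multiply by $859$ to get
\[
q(x)=859-851x+780x^2-525x^3+171x^4+28x^5-29x^6,
\]
and show $q>0$ on $[0,1]$. The values at the endpoints are $q(0)=859$ and $q(1)=433$, which is consistent with $p(1)=433/859$. The polynomial is designed to approximate $1/(1+x)\ge 1/2$ uniformly, so it should stay near $1/(1+x)$ and certainly stay positive. A rigorous proof needs a crude lower bound, and two options are available:
\begin{itemize}
\item Group terms, for example
\[
859-851x+780x^2-525x^3 \quad\text{and}\quad 171x^4+28x^5-29x^6\ge x^4(171+28x-29x^2)>0,
\]
and check that the cubic part is positive on $[0,1]$ by its discriminant or by locating its minimum.
\item Use a Sturm sequence, or interval arithmetic on a fine partition, to certify that $q$ has no roots in $[0,1]$.
\end{itemize}

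The cubic $859-851x+780x^2-525x^3$ has derivative $-851+1560x-1575x^2$. Its discriminant is $1560^2-4\cdot 851\cdot 1575<0$, so the cubic is decreasing on $[0,1]$. Its value at $x=1$ is $263>0$, so it is positive on the whole interval, and together with the quartic group this settles (ii) cleanly.

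I expect the main obstacle to be step (i). The non-negativity of Heath-Brown's $w$ is stated in the source, but a direct verification from the closed form requires care with the trigonometric identities.
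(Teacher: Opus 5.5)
Your argument is correct and uses the same decomposition as the paper: $f(u)=\eta\,w(\eta u)\,p(e^{-(2\sigma-1)u})$ with $e^{-(2\sigma-1)u}\in(0,1]$, so it suffices that $w\ge 0$ and $p>0$ on $(0,1]$.

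The real difference is how you certify $p>0$. The paper numerically isolates the six roots of $p$ and checks that none lies in $(0,1]$. Your grouping is instead a complete hand proof, and it checks out. The derivative $-851+1560x-1575x^2$ of the cubic has discriminant $1560^2-4\cdot 851\cdot 1575=-2927700<0$ and a negative leading coefficient. So the cubic decreases to $263$ at $x=1$. Also $171+28x-29x^2\ge 142$ on $[0,1]$. Hence $859\,p(x)\ge 263$ there. This removes the computer step and gives an explicit lower bound. It is tailored to these particular $\kappa_m$, whereas root isolation is mechanical for any choice.

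For $w\ge 0$ the paper only says ``by inspection'', so citing Heath-Brown is at the same level of rigour. Your optional self-contained sketch, however, needs two fixes.

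\emph{The autocorrelation route.} The idea is exactly right, but $g$ must be supported on an interval of length $L=2\theta\cot\theta$, not $L/2$. Take $g(v)=\cos(v\tan\theta)-\cos\theta$ for $|v|\le\theta\cot\theta$ and $g=0$ otherwise. A direct computation then gives $w(u)=\sec^4\theta\int_{\mathbb{R}}g(v)g(v+u)\,\mathrm{d}v$ identically on $[0,L]$, and $g\ge 0$ yields $w\ge 0$. With $g$ supported on $[0,L/2]$ this fails. The autocorrelation would be supported on $[-L/2,L/2]$. The ordinary convolution would vanish at $u=0$, contradicting $w(0)>0$.

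\emph{The second-derivative fallback.} This fails as stated. We have $w''(0)=-\sec^4\theta\int g'(v)^2\,\mathrm{d}v<0$, while $w''>0$ just to the left of $L$. So $w''$ has no fixed sign on $[0,L]$, and convexity plus $w(L)=w'(L)=0$ is unavailable. What would suffice is the monotonicity $w'\le 0$, which the paper asserts later, combined with $w(L)=0$.
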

\begin{proof}
By inspection $w(u) \ge 0$, so upon taking $x = e^{-(2\sigma - 1)u}$ (and noting that the exponent is negative from the assumption $\sigma > 1/2$), it suffices to show that 
\[
p(x) = \sum_{0 \le m \le M}\kappa_m x^m > 0\qquad (0 < x \le 1),
\]
which we verify by numerically isolating all $M$ roots of the polynomial $p$, checking that none resides on $(0, 1]$, and noting that $p(1) > 0$. 
\end{proof}

Next, we record an explicit estimate for $W(z)$, due to Ford \cite{ford_zero_2002, ford_zero_2022}.
\begin{lemma}[Ford \cite{ford_zero_2002} \S 7]\label{W0_bound_lem}
Let $\theta \in (0, \pi/2)$ and $r > \tan\theta$ be fixed, and let $W$ be defined in Definition \ref{classical_wW_defn}. If we write
\[
W(z) = \frac{w(0)}{z} + W_0(z),
\]
then for all $\Re\,z \ge \nu$ and $|z| \ge r$ one has $|W_0(z)| \le C(\nu, r)|z|^{-3}$,  where 
\[
C(\nu, r) := c_0 r \frac{c_2(r + 1)^2(e^{-2\nu \theta\cot\theta} + 1) + c_1r + c_3r^3}{(r^2 - \tan^2\theta)^2}
\]
and
\[
c_0 = \csc\theta\sec^2\theta,\qquad c_1 = (\theta - \sin\theta \cos\theta)\tan^4\theta,
\]
\[
c_2 = \tan^3\theta \sin^2\theta,\qquad c_3 = (\theta - \sin\theta\cos\theta)\tan^2\theta.
\]
\end{lemma}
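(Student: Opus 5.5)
The plan is to integrate by parts three times, with the third step done carefully, and then bound what is left using the explicit trigonometric form of $w$. Write $L := 2\theta\cot\theta$ for the right endpoint of the support. Each component of $w$ on $[0,L]$ is a combination of $1$, $u$, $\cos(u\tan\theta)$, $u\cos(u\tan\theta)$ and $\sin(\alpha - u\tan\theta)$.

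\textbf{Step 1: local data of $w$.} A direct computation from Definition \ref{classical_wW_defn} gives
\[
w(L)=w'(L)=0,\qquad w'(0)=0,\qquad w''(0),\;w''(L)\ \text{finite}.
\]
This is the twice-differentiability that Heath-Brown built into the construction. Integrating by parts twice gives
\[
W(z)=\frac{w(0)}{z}+\frac{w'(0)}{z^2}+\frac{1}{z^2}\int_0^L e^{-zu}w''(u)\,du .
\]
Since $w'(0)=0$, we get
\[
W_0(z)=z^{-2}\int_0^L e^{-zu}w''(u)\,du .
\]

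\textbf{Step 2: one more integration by parts.} Write $\phi:=w''$ on $[0,L]$. Integrating by parts once more,
\[
W_0(z)=z^{-3}\Big(\phi(0)-e^{-zL}\phi(L)+\int_0^L e^{-zu}\phi'(u)\,du\Big).
\]
The boundary terms are bounded by $|\phi(0)|+e^{-\nu L}|\phi(L)|$ when $\Re z\ge\nu$. This explains the factor $(e^{-2\nu\theta\cot\theta}+1)$ in $C(\nu,r)$.

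\textbf{Step 3: the remaining integral.} Do not bound $\int_0^L e^{-zu}\phi'(u)\,du$ trivially. Instead compute it exactly, since $\phi'$ is an explicit combination of $e^{\pm iu\tan\theta}$, $ue^{\pm iu\tan\theta}$ and constants. Laplace transforms of $e^{\pm iu\tan\theta}$ produce denominators $z\mp i\tan\theta$. Their product is $z^2+\tan^2\theta$, and $ue^{\pm iu\tan\theta}$ squares these denominators. For $|z|\ge r>\tan\theta$ we have
\[
|z^2+\tan^2\theta|\ge r^2-\tan^2\theta .
\]
This accounts for $(r^2-\tan^2\theta)^2$ in the denominator of $C(\nu,r)$. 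The numerators are polynomials in $z$ of degree at most $3$. Bound each coefficient using $|z|\ge r$ in the worst direction, and collect the boundary values at $u=0$ and $u=L$; the latter carry $e^{-zL}$, hence $e^{-\nu L}$. This should produce the terms $c_1r+c_3r^3$ and $c_2(r+1)^2$. The overall factors $c_0r=\csc\theta\sec^2\theta\cdot r$ come from the prefactor $\sec^2\theta$ in $w$, the $\csc$ coefficients, and clearing one power of $|z|$.

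\textbf{Main obstacle.} The delicate part is the bookkeeping in Step 3. The exact rational-in-$z$ expression has to be organised so that every term carries $z^{-3}$ and is controlled by the stated $c_i$. In particular, the cancellations that make the $z^{-2}$ coefficient vanish must not be lost. Expanding the trigonometric terms in exponentials, I would simplify symbolically, checking each identity, such as $w'(0)=0$, with the identities $\sin2\theta=2\sin\theta\cos\theta$ and $\theta\cot\theta\tan\theta=\theta$. Only after that simplification would I apply the triangle inequality termwise.
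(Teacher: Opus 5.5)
Your overall route (reduce $W_0$ to an explicit closed form, then apply the triangle inequality) is the natural one; the paper gives no proof of its own, only the citation to Ford. However, two things go wrong.

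First, the explanation in Step 2 is incorrect. In fact $w''(L)=0$: directly, $w''(L)=\sec^2\theta\,(2\tan^2\theta-2\tan^2\theta)$. So Step 2 gives exactly $W_0(z)=z^{-3}\bigl(w''(0)+\int_0^L e^{-zu}w'''(u)\,du\bigr)$. The factor $e^{-zL}$ enters only one order later, through $w'''(L^-)=-\sec^2\theta\tan^4\theta\neq 0$. That is why, in $C(\nu,r)$, the factor $e^{-2\nu\theta\cot\theta}+1$ multiplies the lower-order term $c_2(r+1)^2$ and not the leading term $c_3r^3$.

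Second, and decisively, Step 3 only asserts that the bookkeeping ``should produce'' the stated $c_i$, and your own Step 2 shows that it cannot. Along the reals, $x^3W_0(x)\to w''(0)$ as $x\to+\infty$, and a direct computation gives
\[
|w''(0)|=\sec^2\theta\tan^2\theta\Bigl(\frac{\theta}{\sin\theta\cos\theta}-1\Bigr)=\csc\theta\sec^3\theta\,(\theta-\sin\theta\cos\theta)\tan^2\theta=c_0c_3\sec\theta,
\]
whereas $C(\nu,r)\to c_0c_3$ as $r\to\infty$. So with $c_0=\csc\theta\sec^2\theta$ as printed, the inequality is false for every $\nu$ once $r$ is large. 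At $\theta=1.1338$ one has $|w''(0)|\approx 50.07$ against $c_0c_3\approx 21.19$. The constant should be $c_0=\csc\theta\sec^3\theta$, which is also what the paper's later numerics (e.g.\ $C(-1,10^{10})<51$) actually use. Your heuristic for where $c_0$ comes from is exactly where the unperformed computation hides this.

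With that correction your plan does work, and all the bookkeeping collapses into one identity. Write $D=d/du$ and $\tau=\tan\theta$. Since $w$ is annihilated on $(0,L)$ by $D^2(D^2+\tau^2)^2$, one gets
\[
z^3W_0(z)=\frac{z\bigl[w''(0)\,z(z^2+\tau^2)+w'''(0^+)\bigl(z^2-1+e^{-zL}(z+1)^2\bigr)\bigr]}{(z^2+\tau^2)^2},
\]
where now $w''(0)=-c_0c_3$, $\tau^2c_3=c_1$ and $w'''(0^+)=-w'''(L^-)=\sec^2\theta\tan^4\theta=c_0c_2$. Then apply $|z^2+\tau^2|\ge |z|^2-\tau^2$, $|z^2-1|,|z+1|^2\le(|z|+1)^2$ and $|e^{-zL}|\le e^{-\nu L}$. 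Finally, do not use ``$|z|\ge r$ in the worst direction'': that is impossible for the numerator, which grows with $|z|$. Instead use that $x\mapsto x^{k+1}/(x^2-\tau^2)^2$ is decreasing on $(\tau,\infty)$ for $0\le k\le 3$.
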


Before proceeding further, in Definitions \ref{classical_wW_defn} and \ref{wfF_defn} let us fix
$\theta = 1.1338,$
the choice of which was determined via numerical experimentation. We therefore have
\[
w(0) = \sec^2\theta (\theta \tan\theta + 3 \theta\cot\theta - 3) = 5.672787598\ldots.
\]
Recall $H$ is defined in \eqref{classical_H_defn}. For future use we also define the following parameters
\begin{equation*}\label{classical_param_defn}
\begin{split}
T_0 &= 10^{10},\\
\delta_m &= (2\sigma - 1)m,\\
\eta_0 &= A_0/\log H,\\
K &= 16,\\
\sigma_0 &= 1 - A_0/\log(KH + T_0),
\end{split}
\end{equation*}
where $A_0$ is from Lemma \ref{classical_main_lem}  or Lemma \ref{classical_main_lem2}. We have defined $\sigma_0$ so that our eventual choice of $\sigma$ satisfies $\sigma \in [\sigma_0, 1)$, and similarly one is guaranteed that $\eta \in (0, \eta_0]$. The parameter $K$ is the degree of the non-negative trigonometric polynomial which we discuss in \S\ref{sec:trig_poly}. 
\begin{table}[ht]
\centering
\begin{tabular}{|c|c|c|}
\hline
&Lemma \ref{classical_main_lem} & Lemma \ref{classical_main_lem2}\\\hline
$A_0$ & $(4.8596)^{-1}$& $(4.8594)^{-1}$\\\hline
$\eta_0$ & $0.0071093\dots$  &$0.0071628\dots$\\ \hline
$\sigma_0$ & $0.9935164\dots$ & $0.9934675\dots$\\\hline
\end{tabular}
\end{table}

Recall that $\Re W(z) \ge 0$ for $\Re\,z \ge 0$ (see \cite{heath_brown_zero_1992}). This property is used to show that $f$ satisfies the non-negativity condition \eqref{one_signedness_condition}.

\begin{lemma}[Non-negativity property]\label{f_one_sign_lem}
Let $\sigma \in [\sigma_0, 1)$, $\eta \in (0, \eta_0]$ and $F(z)$ be as defined in Definition \ref{wfF_defn}. Then for all complex numbers $z$ with $0 \le \Re\,z \le 2\sigma - 1$, one has
\begin{equation}\label{f_one_sign_ineq_1}
\Re F(z) + \Re F(2\sigma - 1 - \overline{z}) \ge 0.
\end{equation} 
Furthermore, for all $-\eta \le \Re\,z \le 1$ and $|\Im z| \ge T_0$, one has
\begin{equation}\label{f_one_sign_ineq_2}
\Re F(z) + \Re F(2\sigma - 1 - \overline{z}) > -\frac{44}{|\Im z|^{2}}\eta. 
\end{equation}
\end{lemma}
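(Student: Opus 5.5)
The plan is to reduce everything to Heath-Brown's positivity $\Re W(z)\ge 0$ and Ford's expansion in Lemma~\ref{W0_bound_lem}. Write $\delta=2\sigma-1$ and $p(x)=\sum_{0\le m\le M}\kappa_m x^m$. Expanding $f$ termwise gives
\[
F(z)=\sum_{0\le m\le M}\kappa_m W\Big(\frac{z+\delta_m}{\eta}\Big).
\]
Since $f$ is real, $\Re F(\overline{\xi})=\Re F(\xi)$, so $\Re F(\delta-\overline z)=\Re F(\delta-z)$. Hence the quantity to bound is the real part of
\[
\Phi(z):=F(z)+F(\delta-z)=\int_{-\infty}^{\infty}e^{-zu}g(u)\,\text{d}u,
\]
where $g(u)=f(u)$ for $u\ge0$ and $g(u)=e^{\delta u}f(-u)$ for $u<0$. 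The integral converges for $0\le\Re z\le\delta$, so $\Re\Phi$ is harmonic in that strip. It tends to $0$ as $|\Im z|\to\infty$, by the asymptotics of $W$. It also satisfies $\Re\Phi(\delta-\overline z)=\Re\Phi(z)$, so its values on $\Re z=\delta$ coincide with those on $\Re z=0$. By the minimum principle (as in Heath-Brown's Lemma~4.1), \eqref{f_one_sign_ineq_1} therefore reduces to the boundary case $z=iy$.

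On $z=iy$, repeating the computation \eqref{hypothetical_W_ineq_1} with $h$ replaced by $p$, and writing $x=e^{-\delta u}$, gives
\[
\Re\Phi(iy)=\Re\int_0^\infty e^{-iyu}\eta w(\eta u)\,p(x)(1+x)\,\text{d}u=\Re\sum_{0\le j\le M+1}c_j\, W\Big(\frac{iy+j\delta}{\eta}\Big),
\]
where $p(x)(1+x)=\sum_j c_jx^j$. Explicitly $c_0=1$, $c_j=\kappa_j+\kappa_{j-1}$ for $1\le j\le M$, and $c_{M+1}=\kappa_M$; numerically
\[
(c_j)_{1\le j\le 7}=\tfrac{1}{859}\,(8,-71,255,-354,199,-1,-29).
\]
The $j=0$ term is $\Re W(iY)\ge0$ with $Y=y/\eta$. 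For $j\ge1$ the argument has real part $j\delta/\eta\ge\delta_0/\eta_0\approx 140$, which is far beyond $\tan\theta$. So Lemma~\ref{W0_bound_lem} applies with, say, $r=\nu=\delta_0/\eta_0$, giving
\[
\Re W\Big(\frac{iy+j\delta}{\eta}\Big)=w(0)\frac{j\delta/\eta}{(j\delta/\eta)^2+Y^2}+O\big(C(\nu,r)\,|\cdot|^{-3}\big).
\]

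The main obstacle is that the $c_j$ change sign, so $\Re W(iY)$ itself must absorb the negative contributions. I would split into two ranges of $Y$.

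\emph{Large $Y$.} Here $\Re W(iY)=\int w(u)\cos(Yu)\,\text{d}u$, and two integrations by parts give $\Re W(iY)\approx -w'(0)/Y^2$ with an explicit oscillatory remainder. The rational main terms of the $j\ge1$ sum, to leading order in $1/Y^2$, combine to $w(0)\frac{\delta}{\eta Y^2}\sum_j jc_j$. Now
\[
\sum_j jc_j=\frac{d}{dx}\big[p(x)(1+x)\big]_{x=1}=\tfrac{1}{859}>0,
\]
and this is exactly what the perturbation of the $\kappa_m$ was arranged to achieve. So the leading terms are both nonnegative, and the Ford errors are $O(\eta^3/|\cdot|^{3})$, which is negligible.

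\emph{Bounded $Y$.} Here I would check $\Re\Phi(iy)\ge0$ numerically on a fine grid in $Y$ and $\delta/\eta$, with Lipschitz bounds on $W$ to pass from the grid to the whole range.

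For \eqref{f_one_sign_ineq_2} the harmonicity argument is unavailable, since $\Re z$ may lie outside $[0,\delta]$. Instead I would apply Lemma~\ref{W0_bound_lem} directly to every term of $F(z)$ and $F(\delta-z)$. For $|\Im z|\ge T_0$ each argument has modulus at least $T_0/\eta$, and its real part is at least $-1$, so Ford's bound applies with a suitable $\nu<0$. This gives
\[
\Re\Phi(z)=\eta w(0)\,\Re\sum_m\kappa_m\Big(\frac{1}{z+\delta_m}+\frac{1}{\delta+\delta_m-z}\Big)+O\big(\eta^3|\Im z|^{-3}\big).
\]
Writing $z=a+it$, the real part of each bracket is $\frac{a+\delta_m}{(a+\delta_m)^2+t^2}+\frac{\delta+\delta_m-a}{(\delta+\delta_m-a)^2+t^2}$. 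This equals $(\delta+2\delta_m)/t^2+O(t^{-4})$, and its coefficient has magnitude at most $(2\delta M+\delta)/t^2\le 13/t^2$. Summing $|\kappa_m|$ times $w(0)$ times this bound should stay below $44\eta/t^2$, with the cubic error terms absorbed because $T_0$ is huge. I would fix the constants numerically at the end.
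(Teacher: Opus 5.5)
\paragraph{Part one: a gap in the positivity step.}
Your reduction to the line $\Re z=0$ is fine. So is the expansion $\Re\Phi(iy)=\Re\sum_{0\le j\le M+1}c_jW((iy+j\delta)/\eta)$; both match the paper, and your $c_j$ and $\sum_j jc_j=1/859$ are correct. The gap is in how you then prove positivity.

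First, the $j=0$ term cannot absorb anything. One has $w'(0)=0$: this is a hypothesis used in Lemma~\ref{kadiri_prop_21}, and it is also forced by Ford's $O(|z|^{-3})$ expansion. So your main term $-w'(0)/Y^2$ vanishes, and $\Re W(iY)=\Re W_0(iY)=O(Y^{-3})$. In the decisive range $y=\eta Y\asymp 1$ this is $O(\eta^3)$, while the $j\ge1$ terms are of order $\eta$. If their sum were negative there, nothing could compensate as $\eta\to0$.

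Second, your split is at the wrong scale. The $1/Y^2$ expansion of the $j\ge1$ terms needs $y\gg M\delta$, i.e.\ $Y\gg M/\eta$. Your ``bounded $Y$'' region is therefore really $Y\lesssim C/\eta$ with $\delta/\eta\in[138,\infty)$. As $\eta$ ranges over $(0,\eta_0]$ this set is non-compact, and no finite grid with Lipschitz bounds certifies it.

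The missing idea is to discard the $j=0$ term, since it is $\ge0$, and show that the $j\ge1$ terms alone are positive. Apply Lemma~\ref{W0_bound_lem} with $\nu=r=138$. Each $\Re W((j\delta+iy)/\eta)$ then equals $\eta w(0)\,j\delta/(j^2\delta^2+y^2)$ up to a relative error $\varepsilon_0/j^2$, with $\varepsilon_0=1/2000$. Dividing by $\delta w(0)\eta$ leaves an $\eta$-independent statement: $\sum_{j\ge1}c_j\,j/(j^2\delta^2+y^2)$, with that error margin, is positive for every real $y$ and every $\delta\in[2\sigma_0-1,1]$.

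Your identity $\sum_j jc_j>0$ covers only the $y\to\infty$ end, and $\sum_j c_j/j>0$ covers only $y=0$. Because the $c_j$ change sign, the intermediate range is the real content of the lemma, and it is what the choice of $\kappa_m$ is engineered for. The paper handles it in three steps. It replaces $\delta_j$ by $j$ or $c_0j$ according to the sign of $c_j$. It clears denominators. It then isolates roots to check that the resulting degree-$12$ polynomial has no real roots.

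\paragraph{Part two: the constant in \eqref{f_one_sign_ineq_2}.}
Your triangle-inequality estimate gives about $13\,w(0)\sum_m|\kappa_m|\approx278$. Even the sharper $w(0)\sum_m|\kappa_m|(1+2m)$ is about $87$. Neither is below $44$.

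The direct route does work, but only through cancellation: $\sum_m\kappa_m(\delta+2\delta_m)=\delta\sum_j jc_j=\delta/859>0$. With this, the main term is positive and only Ford's $O(\eta^3|\Im z|^{-3})$ errors remain. That is actually a stronger conclusion than the paper's. The paper instead perturbs off $\Re z=0$ using \eqref{f_one_sign_ineq_1}, at a cost of $2\cdot5.7\sum_m|\kappa_m|<44$.

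There is one more problem with your claim that every argument has real part at least $-1$. For the arguments coming from $F(2\sigma-1-\overline z)$, this holds only when $\Re z\le 2\sigma-1+\eta$. Near $\Re z=1$ the real part is about $-2(1-\sigma)/\eta$, which is unbounded as $\eta\to0$. The factor $e^{-2\nu\theta\cot\theta}$ in $C(\nu,r)$ is then uncontrolled. The paper's reduction to $-\eta\le\Re z<0$ has the same blind spot. However, only the range $-\eta\le\Re z\le2\sigma-1+\eta$ is ever used, in Lemma~\ref{F_sum_lem}.
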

\begin{proof}
Since $\Re F(2\sigma - 1 - \overline{z}) = \Re F(2\sigma - 1 - z)$, we may show \eqref{f_one_sign_ineq_1} by proving $\Re F(z) + \Re F(2\sigma - 1 - z) \ge 0$ on the boundary of the rectangle $\{z \in \mathbb{C}: 0 \le \Re\,z \le 2\sigma - 1, |\Im z| \le T\}$ for every sufficiently large $T$. The result then holds by the maximum modulus principle (applied to the holomorphic function $e^{-F(z) - F(2\sigma - 1 - z)}$).

First we will show that the desired inequality holds on the line $\Re\,z = 0$ (so that by symmetry it also holds for $\Re\,z = 2\sigma - 1$). Let us write
\[
(1 + x)\sum_{0 \le m \le M}\kappa_m x^m = \sum_{0 \le m \le M + 1}b_m x^m.
\]
Then
\begin{equation}\label{classical_f_one_sign_main_ineq}
\begin{split}
\Re F(iy) &+ \Re F(2\sigma - 1 - iy) \\
&= \Re\int_0^{\infty}e^{-iyu}(1 + e^{-(2\sigma - 1)u})\eta w(\eta u)\sum_{0 \le m \le M}\kappa_m e^{-\delta_m u}\text{d}u\\
&= \Re \sum_{0 \le m \le M + 1}b_m \int_0^{\infty}e^{-(\delta_m + iy)u/\eta}w(u) \text{d}u\\
&\ge \Re\sum_{1 \le m \le M + 1}b_m W\left(\frac{\delta_m + iy}{\eta}\right),
\end{split}
\end{equation}
where we dropped the term corresponding to $m = 0$ since $\Re  W(iy/\eta) \ge 0$. If $z = (\delta_m + iy)/\eta$ with $m \ge 1$, then $|z| \ge \Re\,z \ge (2\sigma_0 - 1)/\eta_0 > 138$. Thus, applying Lemma \ref{W0_bound_lem},
\begin{align*}
\frac{1}{(2\sigma - 1)w(0)}\left|W_0\left(\frac{\delta_m + iy}{\eta}\right)\right| \le \frac{\eta_0^2 C(138, 138)}{(2\sigma_0 - 1)^2 w(0)}\frac{1}{m}\frac{\eta}{|\delta_m + iy|^2} < \frac{\varepsilon_0}{m}\frac{\eta}{|\delta_m + iy|^2} 
\end{align*}
with $\varepsilon_0 = 1/2000$, and also
\[
\frac{1}{(2\sigma - 1)w(0)}\Re \frac{w(0)}{(\delta_m + iy)/\eta} = m\frac{\eta}{|\delta_m + iy|^2}.
\]
Therefore,
\begin{align*}
\frac{1}{(2\sigma - 1)w(0)\eta}&\Re \sum_{1 \le m \le M + 1}b_m W\left(\frac{\delta_m + iy}{\eta}\right) \\
&\ge \sum_{1 \le m \le M + 1}\frac{b_m m - |b_m|\varepsilon_0/m}{|\delta_m + iy|^2} \ge B(y) := \sum_{1 \le m \le M + 1}b_m B_m(y)
\end{align*}
where, since $c_0 m < (2\sigma_0 - 1)m \le \delta_m \le m$, where $c_0 = 151/153$,
\[
B_m(y) := \begin{cases}
\dfrac{m + \varepsilon_0/m}{c_0^2 m^2 + y^2},& b_m \le 0,\\
\dfrac{m - \varepsilon_0/m}{m^2 + y^2},& b_m > 0.
\end{cases}
\]
It suffices to verify that $B(y) \ge 0$ for all $y \in \mathbb{R}$. Substituting the values of $\kappa_m$ ($0 \le m\le M$), one may explicitly derive (with the aid of computer assistance) that $B(y) = p(y)/q(y)$, where
\begin{align*}
p(y) &= 4061245152630328137981 y^{12} + 4077560173170236734684710 y^{10} \\
&- 104378137212291977844887868 y^{8} - 4484512641017853031179075270 y^{6} \\
&+ 135673322742635307737680349343 y^{4} - 229732179325278720034298507440 y^{2} \\
&+ 112359769561546903428467326544,
\end{align*}
\begin{align*}
q(y) &= 4012454647232553285540000 (y^{2} + 1)(y^{2} + 9)(y^{2} + 25)\\
&\qquad\qquad\times (y^{2} + 4c_0^2)(y^{2} + 16c_0^2)(y^{2} + 36c_0^2)(y^{2} + 49c_0^2).
\end{align*}
By inspection, $q(y)$ has no real roots, and by computationally isolating all twelve complex roots of $p(y)$, we verify that none are real. Therefore, $B(y)$ is one-signed, and so, since $B(0) > 0$ we have $B(y) > 0$ for all $y \in \mathbb{R}$. Combined with \eqref{classical_f_one_sign_main_ineq}, one obtains \eqref{f_one_sign_ineq_1} on $\Re\,z = 0$.

Next we show \eqref{f_one_sign_ineq_1} holds on the line segment $\Im z = T$ ($0 \le \Re\,z \le 2\sigma -1$) for sufficiently large $T$ (and hence also on $\Im z = -T$ since $\Re F(z) = \Re F(\overline{z})$). For this we simply note that for any fixed $\eta$ and uniformly for $0 \le x \le 2\sigma - 1$,
\[
\Re F(x + iT) = \Re \sum_{0 \le m \le M}\kappa_m W\left(\frac{x + iT}{\eta}\right) = \frac{\eta w(0)}{T^2}(\kappa + o(1))\qquad (T \to \infty),
\]
where $\kappa := \sum_{0 \le m \le M}\kappa_m > 0$, so $\Re F(x + iT) > 0$ for sufficiently large $T$. Thus \eqref{f_one_sign_ineq_1} holds. 

Now consider \eqref{f_one_sign_ineq_2}. In light of the bound \eqref{f_one_sign_ineq_1} it suffices to show \eqref{f_one_sign_ineq_2} for $-\eta \le \Re\,z < 0$ and $\Im z \ge T_0$. First, suppose $u = x + iy$ is a complex number and $v$ a real number, such that $|v| \le \eta$, $x \ge 0$ and $|y| \ge T_0$. Then, by Lemma \ref{W0_bound_lem} one has
\[
W\left(\frac{u + v}{\eta}\right) - W\left(\frac{u}{\eta}\right) = \eta w(0)\left(\frac{1}{u + v} - \frac{1}{u}\right) + W_0\left(\frac{u + v}{\eta}\right) - W_0\left(\frac{u}{\eta}\right),
\]
where, since $\Re (u/\eta) \ge 0$, $\Re (u + v)/\eta \ge -1$ and $|y/\eta| \ge T_0/\eta_0$ one has 
\[
\left|W_0\left(\frac{u + v}{\eta}\right)\right| \le \frac{\eta_0^2 C(-1, T_0/\eta_0)}{T_0}\frac{\eta}{y^2},\quad \left|W_0\left(\frac{u}{\eta}\right)\right| \le \frac{\eta_0^2 C(0, T_0/\eta_0)}{T_0}\frac{\eta}{y^2}.
\]
Meanwhile, 
\[
\left|\frac{1}{u + v} - \frac{1}{u}\right| = \frac{|v|}{|u||u + v|} \le \frac{\eta}{y^2}.
\]
Combining everything, 
\[
\left|W\left(\frac{u + v}{\eta}\right) - W\left(\frac{u}{\eta}\right)\right| \le \frac{\eta}{y^2}\left(w(0) + \frac{C(-1, T_0/\eta_0) + C(0, T_0/\eta_0)}{T_0}\eta_0^2\right) < \frac{5.7\eta}{y^2}.
\]
Taking $u = \delta_m + iy$ ($m \ge 0$) and $v = \Re\,z$, one gets
\[
\Re F(z) = \sum_{0 \le m \le M}\kappa_m W\left(\frac{z + \delta_m}{\eta}\right) > \Re F(iy) - \frac{5.7\eta}{y^2}\sum_{0 \le m \le M}|\kappa_m|.
\]
Similarly, if we instead take $u = 2\sigma - 1 + \delta_m + iy$ and $v = -\Re\,z$, then 
\[
\Re F(2\sigma - 1 - \overline{z}) > \Re F(2\sigma - 1 + iy) - \frac{5.7\eta}{y^2}\sum_{0 \le m \le M}|\kappa_m|.
\]
Adding the two inequalities gives \eqref{f_one_sign_ineq_2}, since $\Re F(iy) + \Re F(2\sigma - 1 + iy) \ge 0$ by \eqref{f_one_sign_ineq_1}.
\end{proof}

Lemma \ref{f_one_sign_lem} is used to isolate the contribution of a pair of zeroes at height $t$, while discarding all other zeroes (incurring a small loss).

\begin{lemma}\label{F_sum_lem}
Suppose $\zeta(\beta_0 + it) = 0$ with $\beta_0 > 1/2$ and $t \ge H$. Let $A \in (0, A_0]$, $\eta \in (0, A_0/\log t]$ and $F$ be as defined in Definition \ref{wfF_defn}. Furthermore suppose that $\zeta$ has no zeroes $\beta + i\gamma$ in the region $\beta \ge 1 - A/\log \gamma$ and $\gamma \ge H$. Let $s_k = \sigma + ikt$ where $0 \le k \le K$ and $\sigma = 1 - A/\log (K t + T_0)$. Then 
\[
\Re \sum_{\rho}F(s_1 - \rho) \ge F(\sigma - \beta_0) + F(\sigma - 1 + \beta_0) - 10^{-8}.
\]
Furthermore, for any $k \ne 1$ one has $
\Re \sum_{\rho}F(s_k - \rho) \ge -10^{-8}.$
\end{lemma}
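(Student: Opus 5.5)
Since $F$ is real on the real axis, $\Re\sum_\rho F(s_k-\rho)$ equals its value with conjugates. I would pair each zero $\rho$ with its reflection $1-\overline{\rho}$, which is again a zero, and work only with zeroes in the upper half-plane. The sum over $\rho$ in the upper half-plane then becomes $\tfrac12\sum_{\rho}\bigl(\Re F(s_k-\rho)+\Re F(s_k-(1-\overline{\rho}))\bigr)$, or equivalently a sum over pairs $\{\rho,1-\overline\rho\}$. Writing $z=s_k-\rho$, we get $s_k-(1-\overline\rho)=\sigma-1+\overline\rho+ikt$, and this equals $2\sigma-1-\overline{z}$ up to the obvious identification. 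Each pair therefore contributes exactly the quantity $\Re F(z)+\Re F(2\sigma-1-\overline z)$ controlled by Lemma~\ref{f_one_sign_lem}. Zeroes in the lower half-plane give $s_k-\rho$ with imaginary part at least $kt$; I would treat these in the same paired way or bound them directly, using that they lie far from $s_k$.

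For each pair I would split according to the location of $\rho=\beta+i\gamma$.

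\emph{Case 1: $0<\gamma\le H$.} Here $\beta=1/2$ by \cite{platt_riemann_2021}. Then $\Re z=\sigma-1/2\in[0,2\sigma-1]$, so \eqref{f_one_sign_ineq_1} gives a nonnegative contribution.

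\emph{Case 2: $\gamma\ge H$.} The hypothesis gives $\beta<1-A/\log\gamma$. I want $\Re z=\sigma-\beta\ge0$, i.e.\ $1-A/\log(Kt+T_0)\ge\beta$. This holds whenever $\gamma\le Kt+T_0$. When instead $\gamma>Kt+T_0$, the quantity $\Re z$ may be slightly negative, but it stays at least $-A/\log(Kt+T_0)\ge-\eta$ (using $\eta\le A_0/\log t$ and comparing the logarithms, possibly with $A$ in place of $A_0$). Also $|\Im z|=\gamma-kt\ge T_0$ holds automatically in this subcase. So \eqref{f_one_sign_ineq_2} applies and gives a loss of at most $44\eta/(\gamma-kt)^2$. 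The same reasoning covers the range $-\eta\le\Re z<0$ that arises from pairs with $|\gamma-kt|\ge T_0$.

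It remains to confirm that any pair with $\Re z<0$ necessarily has $|\Im z|\ge T_0$. This is exactly where the choice $\sigma=1-A/\log(Kt+T_0)$ is used: zeroes with $\gamma<kt+T_0\le Kt+T_0$ satisfy $\beta<1-A/\log\gamma\le\sigma$.

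For $k=1$, I would single out the pair $\rho_0=\beta_0+it$, $1-\beta_0+it$. Its contribution is exactly $F(\sigma-\beta_0)+F(\sigma-1+\beta_0)$, both arguments being real. Here $\Re z=\sigma-\beta_0\ge0$ fails only if $\beta_0>\sigma$, and that is excluded by hypothesis, since $\gamma=t\ge H$. If $\rho_0$ is multiple, the extra copies are treated as in Case 2. The remaining pairs are handled by the case analysis above.

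The total loss is bounded by $\sum_{\gamma\ge kt+T_0}44\eta/(\gamma-kt)^2$, together with the analogous sums for the conjugate zeroes at distance at least $kt+T_0$. I would control this using Riemann–von Mangoldt bounds $N(T+1)-N(T)\ll\log T$ and partial summation, which gives roughly $\eta\cdot 44\log(Kt+T_0)/T_0$, at most of order $10^{-10}\cdot\log$. This is the main obstacle. The factor $\log\gamma$ grows, so I would combine it with $\eta\le A_0/\log t$ and use an explicit form of $N(T)$, for instance $\frac{T}{2\pi}\log\frac{T}{2\pi e}+O(\log T)$ with explicit constants. The aim is to show the sum is at most $\frac{44A_0}{\log t}\int_{T_0}^\infty \frac{\log(u+Kt)}{2\pi u^2}\,du$ plus lower order terms, which is below $10^{-8}$ because $T_0=10^{10}$.
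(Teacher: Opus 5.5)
Your decomposition is the one the paper uses. You split the zeroes at $T=Kt+T_0$ and pair $\rho$ with $1-\overline{\rho}$; the identity $s_k-(1-\overline{\rho})=2\sigma-1-\overline{(s_k-\rho)}$ holds exactly. You apply \eqref{f_one_sign_ineq_1} to the pairs with $|\gamma|\le T$, keep the pair $\{\rho_0,1-\overline{\rho_0}\}$ intact when $k=1$, and apply \eqref{f_one_sign_ineq_2} to the pairs with $|\gamma|>T$. Finally you bound $\sum_{\gamma>T}\bigl((\gamma-kt)^{-2}+(\gamma+kt)^{-2}\bigr)$ by explicit partial summation against $N(T)$, which is what the paper's appeal to Lehman's lemmas amounts to. One small omission: \eqref{f_one_sign_ineq_1} needs $\Re(s_k-\rho)\le 2\sigma-1$ as well as $\ge 0$. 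You get this by taking the representative of each pair with $\beta\ge 1/2$, so that $\Re(s_k-\rho)\le\sigma-\tfrac12\le 2\sigma-1$.

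The one step that fails as written is the claim $\Re(s_k-\rho)\ge -A/\log(Kt+T_0)\ge-\eta$ for the zeroes with $|\gamma|>T$, which you derive ``using $\eta\le A_0/\log t$''. An upper bound on $\eta$ cannot give this. You need the lower bound $\eta\ge A/\log(Kt+T_0)$, and with $\eta$ arbitrary in $(0,A_0/\log t]$, as the statement literally allows, it can fail.

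This is not cosmetic. When $\Re z<-\eta$ the hypothesis of \eqref{f_one_sign_ineq_2} is violated. Moreover, the $m=0$ term $W(z/\eta)=\int_0^{2\theta\cot\theta}e^{-zu/\eta}w(u)\,\mathrm{d}u$ carries the factor $e^{-u\Re z/\eta}$, which is enormous when $\eta$ is small compared with $|\Re z|$.

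The repair is to take $\eta=1-\beta_0$, as in the only application (Lemma~\ref{classical_iteration_lem}). The zero-free hypothesis applied to $\rho_0$ itself, at height $t\ge H$, gives $\beta_0<1-A/\log t$. Hence $\eta>A/\log t>A/\log(Kt+T_0)$, and every zero satisfies $\Re(s_k-\rho)>\sigma-1=-A/\log(Kt+T_0)>-\eta$.

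The paper's proof also applies \eqref{f_one_sign_ineq_2} without checking this condition, so it relies on the same unstated hypothesis. You should state the condition (e.g.\ $\eta=1-\beta_0$, or $\eta\ge A/\log t$) explicitly rather than derive it from the upper bound on $\eta$.
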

\begin{proof}
Throughout let us write
$T = Kt + T_0$.
We divide the zeroes $\rho = \beta + i\gamma$ in the critical strip into those with $|\gamma| \le T$ and those with $|\gamma| > T$. 
Suppose first that $|\gamma| \le T$. Then 
\[
\beta \le 1 - A/\log |\gamma| \le 1 - A / \log T = \sigma
\]
so that $\Re (s_k - \rho) \ge 0$ for all such zeroes. A similar argument shows that $\Re (s_k - \rho) \le 2\sigma - 1$. Also, if $\rho$ is a complex zero off the critical line, then $1 - \overline{\rho}$ is also a zero. On the other hand, if $\rho$ lies on the critical line then $s_k - \rho = s_k - (1 - \overline{\rho})$. Thus 
%
\begin{equation*}
\Re \sum_{\substack{\rho = \beta + i\gamma\\|\gamma|\le T}} F(s_k - \rho) = \frac{1}{2}\Re\sum_{\substack{\rho = \frac{1}{2} + i\gamma\\|\gamma|\le T}}F(s_k - \rho) + F(s_k - (1 - \overline{\rho}))  + \Re \sum_{\substack{\rho = \beta + i\gamma\\ \beta > \frac{1}{2} \\ |\gamma| \le T}}F(s_k - \rho) + F(s_k - (1 - \overline{\rho})).
\end{equation*}
First, suppose $k \ne 1$. Since $0 \le \Re (s_k - \rho) \le 2\sigma - 1$ we may apply \eqref{f_one_sign_ineq_1} termwise with $z = s - \rho$, so that both sums on the right side are non-negative. Therefore
\begin{equation}\label{sum_F_final_est_1}
\Re \sum_{\substack{\rho = \beta + i\gamma\\|\gamma|\le T}} F(s_k - \rho) \ge 0\qquad (k \ne 1).
\end{equation}
On the other hand if $k = 1$, then we isolate the term corresponding to $\rho = \rho_0$ in the second sum, and apply \eqref{f_one_sign_ineq_1} to all other terms (recall that $\rho_0 = \beta_0 + it$ is our hypothetical zero off the critical line). We obtain 
\begin{equation}\label{sum_F_final_est_2}
\Re \sum_{\substack{\rho = \beta + i\gamma\\|\gamma|\le T}} F(s_1 - \rho) \ge F(\sigma - \beta_0) + F(2\sigma - 1 + \beta_0).
\end{equation}
It remains to estimate the contribution of zeroes with $|\gamma| > T$. These are far from $s$ so the corresponding terms are small, and we can afford to estimate them crudely. Applying \eqref{f_one_sign_ineq_2} termwise, one obtains 
\begin{equation*}
\Re\sum_{\substack{\rho = \beta + i\gamma \\ |\gamma| > T}}F(s_k - \rho) = \Re \sum_{\substack{\rho = \beta + i\gamma\\\gamma > T}}F(s_k - \rho) + F(s_k - \overline{\rho}) \ge -44\eta \sum_{\substack{\rho = \beta + i\gamma \\ \gamma > T}}\left(\frac{1}{(\gamma - kt)^2} + \frac{1}{(\gamma + kt)^2}\right).
\end{equation*}
Here we have made use of the fact that $|\Im (s_k - \rho)|, |\Im (s_k - \overline{\rho})| > T_0$ for each zero in the sum. If $k = 0$ then we use \cite[Lemma 2]{lehman_difference_1966} (see also \cite{BPT,Bellotti}) with $n = 2$ to get
\[
\sum_{\gamma > T_0}\frac{1}{\gamma^2} < \frac{\log T_0}{T_0} 
\]
so that, since $\eta \le A_0/\log t\le \eta_0$ by assumption,
\begin{equation}\label{sum_F_final_est_3}
\Re \sum_{\substack{\rho = \beta + i\gamma \\ |\gamma| > T}}F(\sigma - \rho) > -88\eta_0 \sum_{\gamma > T_0}\frac{1}{\gamma^2} > -10^{-8}. 
\end{equation}
If $k \ne 0$, then we use the same treatment as \cite{HoffTrudgian}. Applying \cite[Lemma 1]{lehman_difference_1966} (see also \cite{BPT}) with $\phi(x) = (x-kt)^{-2} + (x+kt)^{-2}$, we get
\begin{align*}
\sum_{\gamma > T}\phi(\gamma) &= \frac{1}{2\pi}\int_{T}^{\infty}\phi(x) \log \frac{x}{2\pi}\text{d}x + O^*\left(4\phi(T)\log T + 2\int_{T}^\infty\frac{\phi(x)}{x}\text{d}x\right),
\end{align*}
where, here and henceforth, $O^*(A)$ means a complex number whose modulus does not exceed $A$.
Recall that $T = Kt + T_0$ and $k \le K$, so that 
\begin{align*}
&\int_{T}^{\infty}\phi(x) \log \frac{x}{2\pi}\text{d}x \le \int_{T_0}^\infty\left(\frac{1}{x^2} + \frac{1}{(x + 2kt)^2}\right)\log\frac{kt + x}{2\pi}\text{d}x \\
&\qquad\qquad = \frac{1}{T_0}\log\frac{kt + T_0}{2\pi} + \frac{1}{2kt + T_0}\log\frac{kt + T_0}{2\pi}  +\frac{\log (2kt + T_0)}{kt} - \frac{\log T_0}{kt}.
\end{align*}
We drop the last term, and bound the second and third terms using $kt \ge H$, since they are both decreasing in $kt$. Finally, the first term may be bounded using $\log (1 + x) \le x$, so that
\[
\log\frac{kt + T_0}{2\pi} \le \log \frac{Kt}{2\pi} \left(1 + \frac{T_0}{K t}\right) \le \log t + \frac{T_0}{KH} + \log \frac{K}{2\pi}.
\]
Computing the constants explicitly, one finds
\begin{equation}\label{sum_F_est_1}
\int_{T_0}^{\infty}\phi(x) \log \frac{x}{2\pi}\text{d}x < 10^{-10}(\log t + 1).
\end{equation}
Next, since $kt \ge H$, we have
\begin{equation}\label{sum_F_est_2}
\phi(T_0) \log T_0 \le \left(\frac{1}{T_0^2} + \frac{1}{(2H + T_0)^2}\right)\log(kt + T_0) < 10^{-10}(\log t + 1),
\end{equation}
\begin{equation}\label{sum_F_est_3}
\int_{T_0}^{\infty}\frac{\phi(x)}{x}\text{d}x \le \int_{T_0}^{\infty}\left(\frac{1}{x^2} + \frac{1}{(2H + x)^2}\right)\frac{\text{d}x}{x} < 10^{-10},
\end{equation}
where in deriving \eqref{sum_F_est_2} we used $\log (kt + T_0) \le \log t + T_0/(KH) + \log K$. 
Combining \eqref{sum_F_est_1}, \eqref{sum_F_est_2} and \eqref{sum_F_est_3}, and since $\eta \le A_0/\log t \le \eta_0$ by assumption
\begin{align*}
\Re\sum_{\substack{\rho = \beta + i\gamma \\ |\gamma| > T}}F(s_k - \rho) &> -44\eta\left(\frac{10^{-10}}{2\pi} + 6\cdot 10^{-11}\right)(\log t + 1) > -10^{-8}. 
\end{align*}
The result follows from combining the above inequality with \eqref{sum_F_final_est_1}, \eqref{sum_F_final_est_2} and \eqref{sum_F_final_est_3}.
\end{proof}

One can use the assumed properties of $f$ to obtain an ``explicit formula". First, recall the following theorem from \cite{Kadiri}.

\begin{lemma}[Kadiri \cite{Kadiri} Proposition 2.1]\label{kadiri_prop_21}
Let $d > 0$ and suppose $g$ is a real-valued function supported on $[0, d)$ and twice continuously differentiable on $[0, d]$, satisfying $g(d) = g'(0) = g'(d) = g''(d) = 0$. Let $G(z) := \int_0^{\infty}e^{-zu}g(u)\text{d}u$ and $G_2(z) := \int_0^\infty e^{-zu}g''(u)\text{d}u$ denote respectively the Laplace transforms of $g$ and $g''$. Then, for all complex numbers $s$,
\begin{align*}
&\Re \sum_{n \ge 1}\frac{\Lambda(n)}{n^s}g(\log n) = g(0)\left(-\frac{1}{2}\log \pi + \frac{1}{2}\Re\frac{\Gamma'}{\Gamma}\left(\frac{s}{2} + 1\right)\right) + \Re \,G(s - 1) \\
&\qquad - \Re \sum_{\rho}G(s - \rho) + \Re\bigg(\frac{1}{2\pi i}\int_{1/2-i\infty}^{1/2 + i\infty}\frac{G_2(s - z)}{(s-z)^2}\Re\frac{\Gamma'}{\Gamma}\left(\frac{z}{2}\right)\text{d}z  + \frac{G_2(s)}{s^2}\bigg).
\end{align*}
\end{lemma}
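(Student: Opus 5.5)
We would prove the explicit formula of Lemma \ref{kadiri_prop_21} by Mellin inversion, following the classical route for Weil-type explicit formulae. Fix $s$ and write the left side as a Laplace integral. Since $g$ is supported on $[0,d)$ and is $C^2$, integrating by parts twice in $G(z)=\int_0^d e^{-zu}g(u)\,\text{d}u$ gives
\[
G(z)=\frac{g(0)}{z}+\frac{G_2(z)}{z^2},
\]
using $g'(0)=0$ and the vanishing of $g,g'$ at $d$. Consequently $G(z)=O(|z|^{-1})$ and $G(z)-g(0)/z=O(|z|^{-2})$ in vertical strips, with $G_2$ entire and bounded there. By Laplace inversion, for $c$ large,
\[
\sum_{n}\frac{\Lambda(n)}{n^s}g(\log n)=\frac{1}{2\pi i}\int_{(c)}-\frac{\zeta'}{\zeta}(z)\,G(s-z)\,\text{d}z.
\]
Here $c$ is chosen with $\Re(z)>1$ and $\Re(z)>\Re s$, so the integral is really over $z$ with $s-z$ to the left. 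To make the contour manageable, however, we first write $G(s-z)=g(0)/(s-z)+G_2(s-z)/(s-z)^2$. The first piece corresponds to the sharp cutoff $\sum\Lambda(n)n^{-s}\mathbf 1_{n\ge1}$ and is handled directly, while the second is absolutely convergent.

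Next we shift the contour to the left, through $\Re z=1/2$ and beyond. We use the functional equation in the logarithmic-derivative form
\[
-\frac{\zeta'}{\zeta}(z)=\frac{\zeta'}{\zeta}(1-z)+\frac12\frac{\Gamma'}{\Gamma}\Big(\frac z2\Big)+\frac12\frac{\Gamma'}{\Gamma}\Big(\frac{1-z}{2}\Big)-\log\pi .
\]
This passes through the residues at $z=1$, giving $G(s-1)$, and at each zero $\rho$, giving $-G(s-\rho)$. A cleaner organisation is to move only to $\Re z=1/2$, where we use $\Re$ and the symmetry $\rho\leftrightarrow1-\bar\rho$. The contributions from $\zeta'/\zeta(1-z)$ and from $\zeta'/\zeta(z)$ then combine so that only the Gamma factor $\Re\frac{\Gamma'}{\Gamma}(z/2)$ survives on the line; this is exactly the integral term in the statement. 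The term $g(0)\bigl(-\tfrac12\log\pi+\tfrac12\Re\frac{\Gamma'}{\Gamma}(s/2+1)\bigr)$ arises from the residue of the $g(0)/(s-z)$ piece at $z=s$, via the Hadamard-type identity for $\Re\,\zeta'/\zeta(s)$. The extra $G_2(s)/s^2$ comes from the trivial-zero/pole structure at $z=0$, i.e. the pole of $\Gamma'/\Gamma(z/2)$ at $z=0$, or equivalently from the pole of $\zeta'/\zeta$ at $0$ being excluded.

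Concretely, the order of steps is as follows. First, derive the integration-by-parts identity above and record the decay $|G_2(z)|\ll_{\Re z}1$. Second, for the $g(0)/(s-z)$ part, avoid contour shifting altogether. Instead use the classical formula
\[
\Re\frac{\zeta'}{\zeta}(s)=\Re\sum_\rho\frac1{s-\rho}-\Re\frac1{s-1}+\frac12\log\pi-\frac12\Re\frac{\Gamma'}{\Gamma}\Big(\frac s2+1\Big),
\]
multiplied by $g(0)$. This yields the $g(0)$-terms together with $g(0)/(s-1)$ and $\sum g(0)/(s-\rho)$. Third, for the $G_2(s-z)/(s-z)^2$ part, shift from $\Re z=c$ to $\Re z=1/2$ and collect the residues at $1$ and $\rho$; these complete $G(s-1)$ and $G(s-\rho)$. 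Then fold the remaining integral over the zeros with $\Re\rho<1/2$ using the functional equation and take real parts. Finally, extend from $\Re s$ large to all $s$ by analytic continuation in $s$ of both sides, noting that the left side is a finite sum since $g$ has compact support.

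The main obstacle is justifying the contour shift and the convergence of $\sum_\rho G(s-\rho)$. The term $g(0)/(s-\rho)$ alone is only conditionally convergent, which is precisely why we take real parts and pair $\rho$ with $\bar\rho$. Under this pairing $\Re\frac1{s-\rho}$ summed symmetrically converges absolutely, and the $G_2/(s-\rho)^2$ parts converge absolutely by $N(T)\ll T\log T$. For the horizontal segments we would pick heights $T_j$ avoiding zeros, with $\zeta'/\zeta\ll\log^2T_j$ there, so the integrals vanish as $T_j\to\infty$ thanks to the $|s-z|^{-2}$ decay. A secondary difficulty is matching the stated Gamma-term exactly, namely $\Gamma'/\Gamma(s/2+1)$ rather than $\Gamma'/\Gamma(s/2)$. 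This discrepancy is accounted for by the term $G_2(s)/s^2=G(s)-g(0)/s$ from $z=0$, and we would have to track that residue with care.
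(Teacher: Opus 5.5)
The paper gives no proof of this lemma; it quotes Kadiri. So I judge your outline on its own terms. The skeleton is the standard one and is sound: the identity $G(z)=g(0)/z+G_2(z)/z^2$, Laplace inversion, and the Hadamard formula for $\Re\frac{\zeta'}{\zeta}(s)$ to treat the $g(0)$-piece. Your display of that formula is correct and already contains $\frac{\Gamma'}{\Gamma}(\frac s2+1)$. The last ingredient is a shift of the absolutely convergent $G_2$-piece to $\Re z=\frac12$, collecting $z=1$ and the zeros with $\Re\rho>\frac12$.

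There is one bookkeeping error. The $g(0)/(s-z)$ piece equals the sharp cutoff $-g(0)\frac{\zeta'}{\zeta}(s)$ only if $1<c<\Re s$. With $c>\Re s$, as you wrote, that piece is $0$, and the term must instead be collected as the residue at $z=s$ of the $G_2$-piece.

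The real gap is the step on the critical line, which is where the lemma's right-hand side actually comes from. The mechanisms you suggest for it do not work:
\begin{itemize}
\item Taking real parts does not remove the zeta contribution on the line, because the weight $G_2(s-z)/(s-z)^2$ is complex.
\item $\frac{\zeta'}{\zeta}$ has no pole at $0$, since $\zeta(0)=-\frac12$.
\item There is no $\frac{\Gamma'}{\Gamma}(\frac s2)$ versus $\frac{\Gamma'}{\Gamma}(\frac s2+1)$ discrepancy for $G_2(s)/s^2$ to absorb.
\end{itemize}

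Here is what works. On $\Re z=\frac12$ one has $1-z=\bar z$, so the functional equation gives, pointwise, $-\frac{\zeta'}{\zeta}(z)=\frac{\zeta'}{\zeta}(1-z)-\log\pi+\Re\frac{\Gamma'}{\Gamma}(\frac z2)$. Substitute this for the whole integrand on the line and treat the three parts separately.
\begin{itemize}
\item The $\log\pi$ part integrates to $0$ when $\Re s>\frac12$. To see this, shift $w=s-z$ to the right, where $G_2(w)/w^2$ is holomorphic and $O(|w|^{-2})$.
\item Move the $\frac{\zeta'}{\zeta}(1-z)$ part to $\Re z<0$, where $\frac{\zeta'}{\zeta}(1-z)=-\sum\Lambda(n)n^{z-1}$. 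Each term becomes $n^{s-1}\frac{1}{2\pi i}\int G_2(w)w^{-2}n^{-w}\,dw$ over a line in $\Re w>0$, which vanishes by the same rightward shift. So this part equals the residues crossed. The residue at $z=0$ comes from the pole of $\zeta$ at $1$, reflected, and gives exactly $G_2(s)/s^2$. The residues at $z=1-\rho$ with $\Re\rho>\frac12$ give the zeros left of the line, with half-residues for zeros on it. Together with your first shift these assemble the full sum $\sum_\rho$.
\item What remains is the $\Re\frac{\Gamma'}{\Gamma}(\frac z2)$ integral, with coefficient $1$.
\end{itemize}

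Finally, your continuation step cannot reach all $s$. The line integral in the statement jumps across $\Re s=\frac12$: its integrand has a pole at $z=s$ with residue $\frac{g(0)}{2}\bigl[\frac{\Gamma'}{\Gamma}(\frac s2)+\frac{\Gamma'}{\Gamma}(\frac{1-s}{2})\bigr]$, while the left side is continuous. Continue the identity before taking real parts; the poles at $s=1$ and $s=\rho$ cancel. This yields the formula for $\Re s>\frac12$, which is the only range the paper uses.
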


The next lemma is used to control the error term in the case relevant to our application.
\begin{lemma}[Bounding the error term]\label{classical_explicit_formula_error_bound}
Let $s = \sigma + it$, $\sigma \ge \sigma_0$ and $\eta \in (0, \eta_0]$. Assume either $t = 0$ or $t \ge H$ and $\eta \le A_0/\log t$. If $W_0(z) = W(z) - w(0)/z$ then 
\[
\left|\frac{1}{2\pi i}\int_{1/2-i\infty}^{1/2 + i\infty}W_0\Big(\frac{s - z}{\eta}\Big)\Re\frac{\Gamma'}{\Gamma}\left(\frac{z}{2}\right)\text{d}z + W_0\Big(\frac{s}{\eta}\Big)\right| \le \begin{cases}
623 \eta^3,& t = 0,\\
14\eta^2 + 424\eta^3,& t \ge H.
\end{cases}
\]
\end{lemma}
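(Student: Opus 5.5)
The plan is to bound the two terms separately, using Ford's estimate (Lemma \ref{W0_bound_lem}) $|W_0(z)|\le C(\nu,r)|z|^{-3}$ together with elementary bounds for $\Re\frac{\Gamma'}{\Gamma}(z/2)$ on the line $\Re z=1/2$.

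\emph{The term $W_0(s/\eta)$.} We have $\Re(s/\eta)=\sigma/\eta\ge\sigma_0/\eta_0>139$, so Lemma \ref{W0_bound_lem} applies with $\nu=r=\sigma_0/\eta_0$ and gives
\[
|W_0(s/\eta)|\le C(\nu,r)\,\eta^3/|s|^3\le C(\nu,r)\,\eta^3\sigma_0^{-3}.
\]
This holds for both $t=0$ and $t\ge H$, and is an $O(\eta^3)$ contribution with explicit constant.

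\emph{The integral.} Write $z=1/2+iv$. Then $\Re((s-z)/\eta)=(\sigma-1/2)/\eta\ge(\sigma_0-1/2)/\eta_0>69$, so Lemma \ref{W0_bound_lem} applies uniformly in $v$ with $\nu=r=(\sigma_0-1/2)/\eta_0$:
\[
|W_0((s-z)/\eta)|\le C\,\eta^3\big((\sigma-1/2)^2+(t-v)^2\big)^{-3/2}.
\]
For the digamma factor I would use an explicit Stirling-type bound
\[
\left|\Re\frac{\Gamma'}{\Gamma}\Big(\frac14+\frac{iv}{2}\Big)\right|\le \tfrac12\log(v^2+c_1)+c_2
\]
for small explicit constants $c_1,c_2$. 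The $\log$ form handles $|v|\ge1$, and near $v=0$ one checks $|\Re\psi(1/4+iv/2)|\le|\psi(1/4)|\approx4.23$ directly. The integral is then bounded by
\[
\frac{C\eta^3}{2\pi}\int_{-\infty}^{\infty}\frac{\tfrac12\log(v^2+c_1)+c_2}{\big((\sigma-\tfrac12)^2+(t-v)^2\big)^{3/2}}\,dv .
\]

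\emph{Case $t=0$.} The kernel is integrable and the integral is an absolute constant. It can be computed numerically: split at $|v|\le V$ and bound the tail by $\int_V^\infty\log v/v^3\,dv$. Adding the $W_0(s/\eta)$ bound should give $623\eta^3$.

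\emph{Case $t\ge H$.} The kernel concentrates near $v=t$, with total mass
\[
\int\big((\sigma-\tfrac12)^2+u^2\big)^{-3/2}du=\frac{2}{(\sigma-1/2)^2}\le 2(\sigma_0-\tfrac12)^{-2}.
\]
On this window the log factor is about $\log t$, and far from $v=t$ the contribution is negligible. This gives a bound of the shape
\[
\frac{C\eta^3}{2\pi}\cdot\frac{2}{(\sigma_0-1/2)^2}\Big(\tfrac12\log t+c\Big).
\]
The growing factor is absorbed using the hypothesis $\eta\le A_0/\log t$, that is $\eta^3\log t\le A_0\eta^2$, which produces the $14\eta^2$ term; the $O(1)$ part produces the $424\eta^3$ term.

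To make the $\log$ step rigorous, use $\log(v^2+c_1)\le 2\log(t+|t-v|)+O(1)\le 2\log t+2\log(1+|t-v|/t)+O(1)$. The extra $\log(1+|u|/t)\le\log(1+|u|/H)$ integrated against $|u|^{-3}$ is tiny.

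I expect the main obstacle to be obtaining constants as good as the stated $14$ and $424$. This requires a sharp explicit bound on $\Re\psi(1/4+iv/2)$, for example via $\Re\psi(x+iy)\le\log|x+iy|+$ an explicit correction, and careful numerical evaluation of $C(\nu,r)$ at $\theta=1.1338$. The overall structure itself is routine.
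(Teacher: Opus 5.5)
Your proposal is correct and follows the paper's proof essentially step for step. The shared steps are: the split into the integral and the $W_0(s/\eta)$ term; Ford's bound with $\nu=r=(\sigma_0-\frac12)/\eta_0$ (the paper uses $C<52$); an explicit digamma bound (the paper uses Kadiri's piecewise $U(y)$); a numerical integral for $t=0$; and, for $t\ge H$, the kernel mass $2/(\sigma_0-\frac12)^2$ times $\log t$, absorbed via $\eta^3\log t\le A_0\eta^2$.

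One slip: near $v=t$ your bound $\frac12\log(v^2+c_1)$ is about $\log t$, not $\frac12\log t$, as your own rigorous step shows. It is this full $\log t$ that gives $\frac{52}{2\pi}\cdot\frac{2}{(\sigma_0-1/2)^2}A_0\approx 13.98<14$. In the paper the $424\eta^3$ comes entirely from $W_0(s/\eta)$, bounded more crudely with $(\sigma_0-\frac12)^3$ where you use $\sigma_0^3$, so your additive digamma constant (about $4.23$) still fits within $424$.
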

\begin{proof}
Let $E_1(s)$ denote the left side of the above inequality, so that
\begin{align*}
E_1(s) &\le \frac{1}{2\pi} \int_{-\infty}^{\infty}\left|W_0\left(\frac{s - (1/2 + iy)}{\eta}\right)\Re\frac{\Gamma'}{\Gamma}\left(\frac{1/2 + iy}{2}\right)\right|\text{d}y + |W_0(s /\eta)|\\
&= T_1 + T_2,
\end{align*}
say. For convenience we write $x_0 := \sigma_0 - 1/2$. By assumption, $\eta \le \eta_0$. If $\xi = (s - 1/2 - iy)/\eta$ then $|\xi| \ge \Re\,\xi \ge x_0/\eta_0$. Applying Lemma \ref{W0_bound_lem} (and noting that $C(x_0/\eta_0, x_0/\eta_0) < 52$), one has
\begin{equation}\label{classical_explicit_formula_e0}
\left|W_0\left(\frac{s - 1/2 - iy}{\eta}\right)\right| \le \frac{52\eta^3}{(x_0^2 + (t - y)^2)^{3/2}}.
\end{equation}
Meanwhile, as in \cite[Lemma 3.6]{Kadiri} we use 
\begin{equation}\label{est_digamma}
    \left|\Re\frac{\Gamma'}{\Gamma}\left(\frac{1/2 + iy}{2}\right)\right| \le U(y) := \begin{cases}
\displaystyle\frac{1}{2}\log\frac{16}{1 + 4y^2} + \frac{2}{1 + 4y^2} + 2,& |y| < 1/2,\\
\displaystyle\left|\log\frac{|y|}{2} - \frac{2}{1+4y^2}\right| + \frac{2}{3|y|} + \frac{1}{8y^2},& |y| \ge 1/2.
\end{cases}
\end{equation}
If $t = 0$ then via direct computation, 
\begin{equation}\label{classical_explicit_formula_e1}
\int_{-\infty}^{\infty}\frac{U(y)}{(x_0^2 + y^2)^{3/2}}\text{d}y < 24.
\end{equation}
On the other hand if $t \ge H$ then we divide the integral as
\[
\int_{-\infty}^{\infty}\frac{U(t + y)}{(x_0^2 + y^2)^{3/2}}\text{d}y = \int_{-\infty}^{-(t - 2)} + \int_{-(t - 2)}^{t - 2} + \int_{t - 2}^{\infty} = I_1 + I_2 + I_3.
\]
One may verify that $U(x) < \log(100 - x)$ for $x \le 2$, so that 
\[
I_1 < \int_{H - 2}^{\infty}\frac{\log(y + 100)}{y^3}\text{d}y < 10^{-10}.
\]
For $x \ge 2$, one may check that $U(x) \le \log x$ so 
\begin{align*}
I_2 &\le \int_{-(t - 2)}^{t - 2}\frac{\log (t + y)}{(x_0^2 + y^2)^{3/2}}\text{d}y < \log t \int_{|y| \le t - 2}\frac{\text{d}y}{(x_0^2 + y^2)^{3/2}} < \frac{2}{x_0^2}\log t
\end{align*}
and also 
\[
I_3 \le \int_{H - 2}^{\infty}\frac{\log y}{y^3}\text{d}y < 10^{-10}.
\]
Combining these estimates with \eqref{classical_explicit_formula_e0} and \eqref{classical_explicit_formula_e1}, and using $\eta \le A_0/\log t \le \eta_0$ one has 
\begin{equation}\label{E_bound_term1}
T_1 \le \frac{52}{2\pi}\eta^3 \int_{-\infty}^{\infty}\frac{U(y)}{(x_0^2 + y^2)^{3/2}}\text{d}y < \begin{cases}
199\eta^3,& t = 0,\\
14\eta^2,& t \ge H.
\end{cases}
\end{equation}
Next if $\xi = s/\eta$ then $|\xi| \ge \Re\,\xi \ge \sigma_0/\eta_0$ so that by Lemma \ref{W0_bound_lem}, for all $k \ge 0$
\begin{equation}\label{E_bound_term2}
T_2 \le \frac{C(\sigma_0/\eta_0, \sigma_0/\eta_0)}{x_0^3}\eta^3 < 424\eta^3. 
\end{equation}
The desired result follows from combining \eqref{E_bound_term1} and \eqref{E_bound_term2}.
\end{proof}
\begin{remark}
The bounds for the digamma function in \eqref{est_digamma} could be sharpened slightly by estimating more carefully the terms appearing in the corresponding asymptotic expansions in each case. However, such refinements would yield only a negligible improvement in the final constant for the zero-free region. We therefore keep the present estimates for simplicity.
\end{remark}
\begin{lemma}[The explicit formula]\label{explicit_formula_lem}
Let $s = \sigma + it$ with $\sigma \in [\sigma_0, 1)$ and $t \ge H$, and suppose $\eta \in (0, A_0/\log t]$. If $f$ and $F$ are as in Definition \ref{wfF_defn}, then 
\begin{align*}
\Re \sum_{n \ge 1}\frac{\Lambda(n)}{n^s}f(\log n) &= f(0)\sum_{0 \le m \le M}\kappa_m\left(-\frac{1}{2}\log \pi + \frac{1}{2}\Re\frac{\Gamma'}{\Gamma}\left(\frac{s + \delta_m}{2} + 1\right)\right) \\
&\qquad + \Re F(s - 1) - \Re \sum_{\rho}F(s - \rho) + E,
\end{align*}
where 
\[
|E| < \begin{cases}
2353\eta^3,& t = 0,\\
53\eta^2 + 1601\eta^3,& t \ge H.
\end{cases}
\]
\end{lemma}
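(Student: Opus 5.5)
The plan is to reduce the statement to $M+1$ applications of Kadiri's explicit formula (Lemma \ref{kadiri_prop_21}), one for each exponential factor in Definition \ref{wfF_defn}, and then sum the resulting error terms using Lemma \ref{classical_explicit_formula_error_bound}. Put $g(u) := \eta w(\eta u)$, which is supported on $[0, d)$ with $d = 2\theta\cot\theta/\eta$. Then $f(u) = \sum_{0 \le m \le M}\kappa_m g(u)e^{-\delta_m u}$, and since $e^{-\delta_m \log n} = n^{-\delta_m}$,
\[
\sum_{n \ge 1}\frac{\Lambda(n)}{n^s}f(\log n) = \sum_{0 \le m \le M}\kappa_m\sum_{n\ge 1}\frac{\Lambda(n)}{n^{s+\delta_m}}g(\log n).
\]
The sum over $m$ is finite, so the interchange is harmless. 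The Laplace transform of $g$ is $G(z) = W(z/\eta)$, so $\sum_m \kappa_m G(z + \delta_m) = F(z)$. Consequently, applying Lemma \ref{kadiri_prop_21} at the point $s + \delta_m$ and summing over $m$ with weights $\kappa_m$ gives:
\begin{itemize}
\item the term $\Re F(s-1)$ from the pole contributions $G(s+\delta_m-1)$;
\item the term $\Re\sum_\rho F(s-\rho)$ from the zero contributions $G(s+\delta_m-\rho)$, where interchanging the finite $m$-sum with the $\rho$-sum is legitimate because each $\rho$-series converges absolutely;
\item the digamma terms $\tfrac12 \Re\frac{\Gamma'}{\Gamma}\bigl(\frac{s+\delta_m}{2}+1\bigr)$, each weighted by $\kappa_m g(0)$, which gives the displayed main term with the normalisation $g(0) = \eta w(0)$ for each $m$.
\end{itemize}

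First I will check the hypotheses of Lemma \ref{kadiri_prop_21} for $g$. It is real-valued and twice continuously differentiable on $[0,d]$, since $w$ is a combination of $C^\infty$ functions on its support. The boundary conditions $g(d) = g'(0) = g'(d) = g''(d) = 0$ reduce to the same conditions for $w$ at $0$ and at $2\theta\cot\theta$. I would verify these by direct differentiation of the formula in Definition \ref{classical_wW_defn}, or simply cite \cite[Lemma 7.1]{heath_brown_zero_1992}.

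The key identification concerns the error term. Integrating by parts twice and using $g'(0) = 0$ together with the vanishing of $g$ and $g'$ at $d$, I get $G(z) = g(0)/z + G_2(z)/z^2$. Hence
\[
\frac{G_2(z)}{z^2} = W(z/\eta) - \frac{\eta w(0)}{z} = W(z/\eta) - \frac{w(0)}{z/\eta} = W_0(z/\eta).
\]
So the error contributed by the $m$-th application is exactly the quantity bounded in Lemma \ref{classical_explicit_formula_error_bound}, evaluated at $s+\delta_m$ in place of $s$, and taking real parts only decreases the modulus. This step is legitimate because $\Re(s+\delta_m) = \sigma+\delta_m \ge \sigma_0$, the imaginary part is unchanged, and $\eta \le A_0/\log t$; the argument in that lemma used only $\Re\, s \ge \sigma_0$ and the height $t$.

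Finally, with $E = \sum_m \kappa_m E^{(m)}$ I get $|E| \le \bigl(\sum_m|\kappa_m|\bigr)\max_m|E^{(m)}|$, where $\sum_m|\kappa_m| = 3243/859 < 3.7754$. The bounds then follow from $623\cdot 3.7754 < 2353$, $14\cdot 3.7754 < 53$ and $424\cdot 3.7754 < 1601$. I expect the main obstacle to be bookkeeping rather than analysis. One part is verifying the endpoint conditions on $w$. The other is making sure that the shifted points $s+\delta_m$ satisfy the hypotheses of Lemma \ref{classical_explicit_formula_error_bound} uniformly in $m$, including the case $t = 0$.
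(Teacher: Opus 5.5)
Your proposal follows the paper's proof step for step. You write $f$ as the shifted copies $\kappa_m g(u)e^{-\delta_m u}$ of $g(u)=\eta w(\eta u)$, apply Lemma~\ref{kadiri_prop_21} at $s+\delta_m$, identify $G_2(z)/z^2=W_0(z/\eta)$, and invoke Lemma~\ref{classical_explicit_formula_error_bound} at the shifted points. The constants $2353$, $53$, $1601$ are exactly $623$, $14$, $424$ times $\sum_m|\kappa_m|=3243/859$, rounded up, as you computed.

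There is one step that you, like the paper, pass over too quickly. Your computation puts $g(0)=\eta w(0)$ in front of the digamma terms, so the main term is
\[
\eta w(0)\sum_{0\le m\le M}\kappa_m\Bigl(-\tfrac12\log\pi+\tfrac12\Re\tfrac{\Gamma'}{\Gamma}\bigl(\tfrac{s+\delta_m}{2}+1\bigr)\Bigr).
\]
However, Definition~\ref{wfF_defn} gives $f(0)=\kappa\,\eta w(0)$, where $\kappa=\sum_m\kappa_m=433/859$.

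So your claim that this ``gives the displayed main term'' is correct only if the displayed $f(0)$ is read as $\eta w(0)$. Read literally, the two sides differ by
\[
(1-\kappa)\,\eta w(0)\sum_m\kappa_m(\cdots)\approx\tfrac12\kappa(1-\kappa)\,\eta w(0)\log t .
\]
This is of order $\eta\log t$ and cannot be absorbed into $E=O(\eta^2)$.

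The paper's proof makes the same substitution of $f(0)$ for $\eta w(0)$. Its later use confirms that $\eta w(0)$ is the intended coefficient: both the $-1.568\,w(0)\eta$ term and the final division by $a\kappa w(0)/2$ in Lemma~\ref{classical_iteration_lem} require it. You should state and prove the lemma with $\eta w(0)$, equivalently $f(0)/\kappa$, in front of the digamma sum.

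A minor point: $\sum_\rho G(s+\delta_m-\rho)$ is not absolutely convergent, since $G(z)\sim\eta w(0)/z$; only its real parts are absolutely summable. Interchanging it with the finite $m$-sum needs only convergence, so the argument is unaffected.
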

\begin{proof}
Note that if $g(u) = \eta w(\eta u)$ then $g(d) = g'(0) = g'(d) = g''(d)$ and
\[
\frac{G_2(z)}{z^2} = G(z) - \frac{g(0)}{z} = W_0(z/\eta).
\]
We apply Lemma \ref{explicit_formula_lem} with this choice of $g$ and with $s$ replaced by $s + \delta_m$ ($0 \le m \le M$):
\begin{align*}
\Re\sum_{n \ge 1}&\frac{\Lambda(n)}{n^s}f(\log n) = \Re \sum_{0 \le m \le M}\kappa_m\sum_{n \ge 1}\frac{\Lambda(n)}{n^{s + \delta_m}}\eta w(\eta \log n) \\
&=\Re \sum_{0 \le m \le M}\kappa_m\bigg(\eta w(0)\left(-\frac{1}{2}\log \pi + \frac{1}{2}\frac{\Gamma'}{\Gamma}\left(\frac{s + \delta_m}{2} + 1\right)\right) \\
&\qquad\qquad + W\left(\frac{s + \delta_m - 1}{\eta}\right) - \sum_{\rho}W\left(\frac{s + \delta_m - \rho}{\eta}\right) + E_1(s + \delta_m)\bigg)\\
&= f(0) \Re \sum_{0 \le m \le M}\kappa_m\left(-\frac{1}{2}\log \pi + \frac{1}{2}\frac{\Gamma'}{\Gamma}\left(\frac{s + \delta_m}{2} + 1\right)\right) \\
&\qquad\qquad + \Re F(s - 1) - \Re \sum_{\rho}F(s - \rho) + \sum_{0 \le m \le M}\kappa_m E_2(s + \delta_m),
\end{align*}
where 
\[
E_2(s) := \Re\frac{1}{2\pi i}\int_{1/2-i\infty}^{1/2 + i\infty}W_0\left(\frac{s - z}{\eta}\right)\Re\frac{\Gamma'}{\Gamma}\left(\frac{z}{2}\right)\text{d}z + \Re W_0(s/\eta).
\]
Applying Lemma \ref{classical_explicit_formula_error_bound} (with $s$ replaced by $s + \delta_m$ ($0 \le m \le M$) as required), and using the values of $\kappa_m$, one has
\[
\sum_{0 \le m \le M}|\kappa_m E_2(s + \delta_m)| < \begin{cases}
2353\eta^3,& t = 0,\\
53\eta^2 + 1601\eta^3,& t \ge H,
\end{cases}
\]
as required. 
\end{proof}

\section{The trigonometric polynomial}\label{sec:trig_poly}
As we briefly mentioned earlier, we require a non-negative trigonometric polynomial $
P(x) := \sum_{0 \le k \le K}a_k \cos kx \ge 0$
with coefficients satisfying $a_0 < a_1$ and $a_k \ge 0$ $(0 \le k \le K)$. 
We will use the following choice, which is a rational version of the polynomial in \cite[Table 5]{HoffTrudgian}
\[
P(x) := \frac{1}{14912370}\bigg|\sum_{0 \le k \le K}c_k e^{ik x}\bigg|^2 ,
\]
where we recall that $K = 16$, $c_0 = 4$ and
\begin{alignat*}{4}
&c_1 = -8,\qquad\qquad\quad
&&c_2 = 2,\qquad\qquad\quad
&&c_3 = 20,\qquad\qquad\quad
&&c_4 = -9,\\
&c_5 = -34,
&&c_6 = 27,
&&c_7 = 91,
&&c_8 = -27,\\
&c_{9} = -201,
&&c_{10} = 32,
&&c_{11} = 895,
&&c_{12} = 1949,\\
&c_{13} = 2389,
&&c_{14} = 1896,
&&c_{15} = 949,
&&c_{16} = 239.
\end{alignat*}
One may verify computationally that all conditions on $P(x)$ are satisfied. In particular 
\begin{equation}\label{classical_trig_poly_defn}
a_0 = 1,\qquad a_1 = \frac{865534}{497079},\qquad a := \sum_{1 \le k \le K}a_k = \frac{2919857}{828465}.
\end{equation}
In standard proofs of the zero-free region, the non-negativity of $P(x)$ is used to establish 
\[
\sum_{n \ge 1}\frac{\Lambda(n)}{n^\sigma}f(\log n)P(t\log n) \ge 0.
\]
In the remainder of this section, we show that it is possible to strengthen this inequality by replacing the right side with a positive quantity. This improvement will be used later to obtain a larger zero-free region.

\begin{lemma}\label{classical_main_trig_lower_bound_lem}
Let $f(u)$ be defined in Definition \ref{wfF_defn} and $P(x)$ be defined as above. Then 
\[
\sum_{n \ge 1}\frac{\Lambda(n)}{n^\sigma}f(\log n)P(t\log n) \ge 0.1186f(0).
\]
\end{lemma}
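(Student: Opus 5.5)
Every term of the sum is non-negative: $\Lambda(n)\ge 0$, $f(\log n)\ge 0$ by the lemma on $f$, and $P\ge 0$ by construction. So any lower bound can come from a few explicit terms, and the natural one is $n=2$, hoping that $\Lambda(2)=\log 2$ already supplies the bound. The difficulty is that $P(t\log 2)$ can be small for some $t$. Since $P(x)=|\sum_k c_k e^{ikx}|^2/14912370$ is a non-negative trigonometric polynomial of degree $16$, it has genuine zeros, or near-zeros, at certain $x$, and $t$ is arbitrary, so a single term will not do.

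\textbf{Using several prime powers.} The remedy I would try is to combine $n=2$ and $n=3$ (and possibly $n=4,5$):
\[
\sum_{n\ge1}\frac{\Lambda(n)}{n^\sigma}f(\log n)P(t\log n)\ge \log 2\,\frac{f(\log 2)}{2^\sigma}P(t\log 2)+\log 3\,\frac{f(\log 3)}{3^\sigma}P(t\log 3).
\]
Since $\log 2$ and $\log 3$ are linearly independent over $\mathbb{Q}$, the pair $(t\log2,t\log3)$ ranges densely over the torus. So the quantity to control is
\[
\min_{x,y}\bigl(\alpha P(x)+\beta P(y)\bigr)=\alpha\min P+\beta\min P,
\]
which is useless if $\min P=0$. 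Taking more $n$ hits the same obstruction, because distinct primes give independent phases. Hence if $P$ vanishes anywhere, no finite collection of primes yields a positive bound uniform in $t$ via independence alone. The real content must therefore be that $P$ is strictly positive, with $\min_x P(x)=:p_{\min}>0$.

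\textbf{Main steps.} Assuming $p_{\min}>0$, the plan is as follows.
\begin{enumerate}
\item Compute a rigorous lower bound for $p_{\min}$ by interval arithmetic on the explicit degree-16 polynomial $|\sum c_k e^{ikx}|^2$, for example by bounding the derivative and sampling on a fine grid.
\item Bound the weights from below. Here $f(\log n)=\eta w(\eta\log n)\,p(n^{-(2\sigma-1)})$, with $\eta\in(0,\eta_0]$ and $\sigma\in[\sigma_0,1)$.
\item Because the bound is normalised by $f(0)=\eta w(0)$, and $w$ is continuous with $w(\eta\log n)\to w(0)$ as $\eta\to0$, control $w(\eta\log n)/w(0)$ from below. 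Since $\eta\log n\le\eta_0\log n$ is tiny for small $n$, a derivative bound for $w$ near $0$ suffices.
\item Control $p(n^{-(2\sigma-1)})$ from below, where $n^{-(2\sigma-1)}\approx 1/n$ because $2\sigma-1\approx 0.987$. This needs an explicit evaluation of $p$ at roughly $1/2$, $1/3,\dots$.
\end{enumerate}
Summing over $n$ up to some $N$ then gives the bound
\[
p_{\min}\sum_{n\le N}\frac{\Lambda(n)}{n}\,\frac{p(n^{-1})\,w(\eta_0\log n)}{w(0)}\,f(0),
\]
up to monotonicity adjustments in $\sigma$. I would choose $N$, where $\eta_0\log N$ must stay inside the support of $w$, so that this exceeds $0.1186$.

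\textbf{Main obstacle.} The hardest step is certifying $p_{\min}$, and in particular confirming that it is not $0$. If $P$ does vanish somewhere, the fallback is a finer argument. At a zero $x_0$ of $P$, use $n=2$ and $n=4$ (phases $x$ and $2x$) together with $n=8$ and so on, minimising over the single variable $x=t\log 2$ the combination
\[
\sum_j \frac{\log 2\,f(j\log2)}{2^{j\sigma}}\,P(jx),
\]
then add the independent contribution of the prime $3$ with its own minimum. Everything reduces to one-variable numerical minimisation of explicit trigonometric polynomials over $x\in[0,2\pi]$, certified with a Lipschitz bound and a grid.
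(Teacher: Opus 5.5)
\textbf{Gap in the main line.} Your main line fails. The bound $p_{\min}\sum_n\Lambda(n)n^{-\sigma}f(\log n)$ cannot reach $0.1186f(0)$, whether or not $p_{\min}=\min_x P(x)$ is strictly positive.

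\begin{itemize}
\item $P$ is tiny somewhere. Since $\sum_k(-1)^k c_k=12$, we get $p_{\min}\le P(\pi)=144/14912370<10^{-5}$.
\item The weights are bounded. Because $w$ is decreasing and $p\le 1$ on $[0,1]$, we have $f(\log n)\le \eta w(0)=f(0)/\kappa<2f(0)$.
\item The support is short. For $\eta=\eta_0$, which you must allow, $f(\log n)=0$ once $\log n>2\theta\cot\theta/\eta_0\approx 149$.
\item Hence the total is small. The sum $\sum_{\log n\le 149}\Lambda(n)n^{-\sigma}$ is at most about $250$, so your steps 1--4 yield at most roughly $5\cdot 10^{-3}f(0)$, more than twenty times too little.
\end{itemize}

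This is structural, not bad luck. An optimal $P$ must touch zero, since if $\min P>0$ one could lower $a_0$ and improve the polynomial. The paper's rational version duly nearly vanishes. So the relevant question is not whether $P$ vanishes but how small it gets. Certifying $p_{\min}>0$ is therefore a dead end, not the crux.

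\textbf{The fix is your fallback.} The idea you hold in reserve is exactly the paper's proof, and it has to be the main argument.

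Your independence obstruction concerns distinct primes only. For a fixed prime the phases $x,2x,3x,\dots$ (with $x=t\log p$) are rigidly linked, so $P(x),P(2x),\dots$ cannot all be small at once. For example, at $x=\pi$ one has $P(2x)=P(0)\approx 4.52$.

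The paper proceeds as follows.
\begin{itemize}
\item It uses $f(u)\ge\kappa f(0)$ on $[0,59]$.
\item By positivity it drops all primes $p>100$ and all powers $p^j$ with $j>15$.
\item For each remaining prime it bounds $G_p(\sigma,x)=\sum_{j\le 15}p^{-j\sigma}P(jx)$ below by $G_p(1,x)$. This is in fact immediate, since each term is non-negative and decreasing in $\sigma$.
\item It rewrites $G_p(1,x)$ as a polynomial in $\cos x$ via Chebyshev polynomials and certifies its minimum $m_p$ exactly.
\item Summing these minima needs no independence, and gives $\kappa\sum_{p<100}m_p\log p>0.23545\kappa>0.1186$.
\end{itemize}

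\textbf{How many primes you need.} The paper's margin is thin. With its uniform weight $\kappa f(0)$, the primes $2$ and $3$ alone give only about $\kappa(0.1623+0.0550)f(0)\approx 0.1095f(0)$. So your ``$2$, then $3$'' version needs the sharper individual weights from your steps 3--4.

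Those weights are sizeable. Note that by the Definition $f(0)=\eta w(0)\kappa$, not $\eta w(0)$ as you write. With that correction, for instance $f(\log 2)\approx\frac43 f(0)$. With these per-term weights, two primes plausibly suffice.
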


Our main idea is that instead of applying the estimate $P(t\log n) \ge 0$ pointwise, we first sum over terms  $n = p^m$ ($m \ge 1$) where $p$ is a fixed prime. In particular, we only consider  primes less than $100$ since the contributions from the other primes are negligible. 
\begin{lemma}\label{classical_f_lower_est}
If $f$ is as defined in Definition \ref{wfF_defn}, then $f(u) \ge \kappa f(0)$ for $0 \le u \le 59$, where $\kappa = \sum_{0 \le m \le M}\kappa_m$.
\end{lemma}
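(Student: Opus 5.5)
Write $x = e^{-(2\sigma-1)u}$ and $p(x) = \sum_{0 \le m \le M}\kappa_m x^m$. Since $p(1) = \kappa$, we have $f(0) = \eta w(0)\kappa$, and $f(u) = \eta w(\eta u)p(x)$. The claim is therefore equivalent to
\[
\frac{w(\eta u)}{w(0)}\cdot\frac{p\big(e^{-(2\sigma-1)u}\big)}{\kappa} \ge 1 \qquad (0 \le u \le 59),
\]
uniformly in $\sigma \in [\sigma_0,1)$ and $\eta \in (0,\eta_0]$. The two factors pull in opposite directions. The first factor is at most $1$ and drops as $u$ grows. The second factor is at least $1$ and grows from $1$ at $u=0$ towards $p(0)/\kappa = 859/433 \approx 1.98$ as $x \to 0$. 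My plan is to reduce the problem to a single worst-case choice of parameters and then verify a one-variable inequality.

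\emph{Step 1: reduce to worst-case parameters.} First I will check that $w$ is non-increasing on $[0, 2\theta\cot\theta]$ for $\theta = 1.1338$. This can be done by differentiating the closed form in Definition \ref{classical_wW_defn}, checking $w'(0)=0$, and checking the sign of $w'$ numerically on the interval. Heath-Brown's construction makes $w$ decreasing, so this should be routine. Since $\eta u \le 59\eta_0 \approx 0.42 < 2\theta\cot\theta$, the first factor is then minimised at $\eta = \eta_0$. Next I will check that $p$ is decreasing on $[0,1]$ by isolating the real roots of $p'$, in the same way the paper handled $p$ itself. Since $x = e^{-(2\sigma-1)u}$ increases as $\sigma$ decreases, the second factor is minimised at $\sigma = \sigma_0$. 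It then suffices to prove
\[
g(u) := w(\eta_0 u)\, p\big(e^{-(2\sigma_0-1)u}\big) - \kappa w(0) \ge 0 \qquad (0 \le u \le 59).
\]
I will use the values of $\eta_0$ and $\sigma_0$ for whichever of Lemma \ref{classical_main_lem} or Lemma \ref{classical_main_lem2} gives the worse case, or simply check both.

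\emph{Step 2: verify the one-variable inequality.} On the bulk of the interval I will verify $g \ge 0$ with a fine grid together with a Lipschitz bound for $g$. Derivative bounds for $w$ and $p$ are easy to obtain from the explicit formulas. Equivalently, one could use interval arithmetic.

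\emph{Step 3: handle the neighbourhood of $u=0$.} This is the step I expect to be the main obstacle, because $g(0)=0$ and a grid check cannot certify positivity there. I will treat a small interval $[0,u_1]$ separately. Since $w'(0)=0$, we have $w(\eta_0 u) \ge w(0) - \tfrac12 \eta_0^2 u^2 \max|w''|$. Since $p'(1)<0$, the second factor gains linearly: $p(x) \ge \kappa + c(1-x)$ for some $c>0$, and $1 - x \ge (2\sigma_0-1)u - \tfrac12 u^2$. The product is therefore at least $\kappa w(0) + c' u - O(u^2)$ with $c'>0$. This is positive on an explicit interval $[0,u_1]$, and the grid check from Step 2 covers $[u_1,59]$.
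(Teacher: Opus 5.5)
Your reduction in the first display is not an equivalence, and the slip causes the step that fails. Since $f(0)=\eta w(0)\kappa$, the claim $f(u)\ge \kappa f(0)$ reads $w(\eta u)\,p(x)\ge \kappa^2 w(0)$. So the product of your two factors must be at least $\kappa$, not $1$.

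What you set out to verify, $g(u)=w(\eta_0u)\,p(e^{-(2\sigma_0-1)u})-\kappa w(0)\ge 0$, is the strictly stronger statement $f(u)\ge f(0)$. It does imply the lemma. It is also the inequality the paper's computer check establishes. The paper has the same right-hand side $\kappa w(0)$, but bounds $\sum_m\kappa_m e^{-\delta_m u}$ termwise, choosing $d_m$ by the sign of $\kappa_m$. Your use of the monotonicity of $p$ on $[0,1]$, which does hold, gives the exact worst case instead. With the parameters of Lemma \ref{classical_main_lem}, your plan therefore coincides with the paper's and goes through.

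The strengthening is not free, however. At $u=59$ one has $p\approx 1$, so $g(59)\ge 0$ amounts to $w(59\eta_0)\ge\kappa w(0)\approx 2.8595$.
\begin{itemize}
\item With $\eta_0\approx 0.0071093$ (Lemma \ref{classical_main_lem}) this holds with little room: $w(0.4195)\approx 2.873$.
\item With $\eta_0\approx 0.0071628$ (Lemma \ref{classical_main_lem2}) it fails: $w(0.4226)\approx 2.844$.
\end{itemize}
Lemma \ref{classical_main_lem2} is precisely the ``worse case'' you propose to use. There $g(59)<0$ and your Step 2 breaks, even though the lemma itself is still true. The paper's check, whose left side at $u=59$ is also essentially $w(59\eta_0)$, has the same limitation. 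Likewise, the tangency $g(0)=0$ that you identify as the main obstacle is created entirely by the spurious factor $\kappa$.

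With the correct right-hand side $\kappa^2 w(0)\approx 1.441$, there is slack $\kappa(1-\kappa)w(0)$ at $u=0$. Step 3 disappears, and no grid is needed for either parameter set. The argument uses only the monotonicity of $w$ and $p$:
\begin{itemize}
\item For $0\le u\le 1$, use $p(x)\ge p(1)=\kappa$ and $w(\eta u)\ge w(\eta_0)>\kappa w(0)$.
\item For $1\le u\le 59$, use $p(x)\ge p(e^{-(2\sigma_0-1)})\approx 0.729$ and $w(\eta u)\ge w(59\eta_0)>2.84$. The product then exceeds $2$.
\end{itemize}
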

\begin{proof}
Note that $w'(x) \le 0$ for $x \ge 0$, so $w(\eta u) \ge w(\eta_0 u)$. Meanwhile, $2\sigma_0 - 1 \le \delta_m \le m$ so
\[
\eta^{-1}f(u) = w(\eta u) \sum_{0 \le m \le M}\kappa_m e^{-\delta_m u} \ge w(\eta_0 u) \sum_{0 \le m \le M}\kappa_m e^{-d_m u},
\]
where $d_m = (2\sigma_0 - 1)m$ if $\kappa_m < 0$ and $d_m = m$ otherwise. The right side is a pure function of $u$, and a routine computer-assisted computation is used to verify that 
\[
- \kappa w(0) + w(\eta_0 u) \sum_{0 \le m \le M}\kappa_m e^{-d_m u} \ge 0
\]
for $0 \le u \le 59$. The result follows from multiplying by $\eta$. 
\end{proof}
Using the non-negativity of $f(u)$ and $P(x)$, we may discard all terms with $n>N:=e^{59}$. Together with Lemma \ref{classical_f_lower_est}, this gives
$$
\sum_{n \geq 1} \frac{\Lambda(n)}{n^\sigma} f(\log n) P(t \log n) \geq \kappa f(0) \sum_{1 \leq n \leq N} \frac{\Lambda(n)}{n^\sigma} P(t \log n) .
$$
Expanding over prime powers, we obtain
$$
\sum_{n \geq 1} \frac{\Lambda(n)}{n^\sigma} f(\log n) P(t \log n) \geq \kappa f(0) \sum_p \log p \sum_{1 \leq m \leq \log N / \log p} p^{-m \sigma} \sum_{0 \leq k \leq K} a_k \cos (m k t \log p) .
$$
Since $P(x) \geq 0$, we may further discard all primes $p>100$ and all terms with $m>15$, since a numerical verification shows that their contribution is negligible. Hence
$$
\sum_{n \geq 1} \frac{\Lambda(n)}{n^\sigma} f(\log n) P(t \log n)>\kappa f(0) \sum_{p < 100} \log p \sum_{0 \leq k \leq K} a_k \sum_{1 \leq m \leq 15} p^{-m \sigma} \cos (m k t \log p) .
$$
Fix a prime $p$, and write $x=t \log p$. Denoting $\ell=m k$, we can rewrite the double sum 
\[
G_p(\sigma, x)=\sum_{0 \leq k \leq K} a_k \sum_{1 \leq m \leq 15} p^{-m \sigma} \cos (m k x)
\]
as
$$
G_p(\sigma, x)=\sum_{\ell \leq 15 K} b_{\ell}(\sigma) \cos (\ell x),
$$
where
$$
b_0(\sigma)=a_0 \sum_{m=1}^{15} p^{-m \sigma}=a_0 \frac{p^{-\sigma}\left(1-p^{-15 \sigma}\right)}{1-p^{-\sigma}},
$$
and, for $\ell \geq 1$,
$$
b_{\ell}(\sigma)=\sum_{\substack{1 \leq k \leq K \\ k \mid \ell \\ \ell / k \leq 15}} a_k p^{-\sigma \ell / k}.
$$
Equivalently,
$$
G_p(\sigma, x)=a_0 \sum_{m=1}^{15} p^{-m \sigma}+\sum_{\ell=1}^{15 K}\left(\sum_{\substack{1 \leq k \leq K \\ k \mid \ell \\ \ell / k \leq 15}} a_k p^{-\sigma \ell / k}\right) \cos (\ell x) .
$$
Differentiating with respect to $\sigma$, we obtain
$$
\frac{\partial G_p}{\partial \sigma}(\sigma, x)=-(\log p) a_0 \sum_{m=1}^{15} m p^{-m \sigma}-(\log p) \sum_{\ell=1}^{15 K}\left(\sum_{\substack{1 \leq k \leq K \\ k \mid \ell \\ \ell / k \leq 15}} a_k \frac{\ell}{k} p^{-\sigma \ell / k}\right) \cos (\ell x) .
$$
For each prime $p <100$, we verified numerically, using Mathematica, that
$$
\frac{\partial G_p}{\partial \sigma}(\sigma, x)<0 \quad\left(\sigma \in\left[\sigma_0, 1\right], x \in[0,2 \pi]\right) .
$$
Hence, for every fixed $x$, the function $\sigma \mapsto G_p(\sigma, x)$ is decreasing on $\left[\sigma_0, 1\right]$, and therefore
$
G_p(\sigma, x) \geq G_p(1, x).
$
It remains to find a lower bound for $G_p(1, x)$. Writing $y=\cos x$, we use the identity
$
\cos (\ell x)=T_{\ell}(y),
$
where $T_{\ell}$ is the $\ell$-th Chebyshev polynomial. Thus $G_p(1, x)$ becomes a polynomial $P_p(y)$ on $[-1,1]$. For each prime $p < 100$, we computed numerically the minimum of $P_p(y)$ on $[-1,1]$, truncated this minimum downward to the eighth decimal place, and then verified rigorously that
$$
P_p(y)-m_p>0, \quad(y \in[-1,1]),
$$
by exact real-algebraic verification in Mathematica. This yields a certified lower bound
$
G_p(1, x) > m_p.
$
For all primes $p<100$, the resulting certified lower bounds are displayed in Table \ref{tab:prime-lower-bounds}.
\begin{table}[ht]
\centering
\begin{tabular}{|c|c|c|c|}
\hline
$p$ & Certified lower bound $m_p$ & $p$ & Certified lower bound $m_p$ \\
\hline
2  & 0.23416332          & 43 & $4.91\cdot 10^{-6}$ \\
3  & 0.05006283          & 47 & $3.74\cdot 10^{-6}$ \\
5  & 0.00735709          & 53 & $2.59\cdot 10^{-6}$ \\
7  & 0.00205251          & 59 & $1.87\cdot 10^{-6}$ \\
11 & 0.00039742          & 61 & $1.69\cdot 10^{-6}$ \\
13 & 0.00022273          & 67 & $1.27\cdot 10^{-6}$ \\
17 & 0.00009071          & 71 & $1.06\cdot 10^{-6}$ \\
19 & 0.00006314          & 73 & $9.7\cdot 10^{-7}$ \\
23 & 0.00003427          & 79 & $7.7\cdot 10^{-7}$ \\
29 & 0.00001656          & 83 & $6.6\cdot 10^{-7}$ \\
31 & 0.00001346          & 89 & $5.3\cdot 10^{-7}$ \\
37 & $7.8\cdot 10^{-6}$  & 97 & $4.1\cdot 10^{-7}$ \\
41 & $5.69\cdot 10^{-6}$ &    &                      \\
\hline
\end{tabular}
\caption{Certified truncated lower bounds $m_p$ for $\min_x G_p(1,x)$, for primes $p<100$.}
\label{tab:prime-lower-bounds}
\end{table}
Summing these contributions, we obtain
$$
\sum_{p <100}(\log p) m_p>0.23545 .
$$
Therefore,
$$
\sum_{n \geq 1} \frac{\Lambda(n)}{n^\sigma} f(\log n) P(t \log n) \geq \kappa f(0) \sum_{p <100}(\log p) m_p>0.23545 \kappa f(0) .
$$
Using the chosen value of $\kappa$, this gives
$$
\sum_{n \geq 1} \frac{\Lambda(n)}{n^\sigma} f(\log n) P(t \log n)>0.1186 f(0),
$$
which proves Lemma \ref{classical_main_trig_lower_bound_lem}.

\section[Proof of Lemma 2.1]{Proof of Lemma \ref{classical_main_lem}}\label{castle}
To complete the argument we require a few more results to estimate certain terms appearing on the right side of Lemma \ref{explicit_formula_lem}; these are recorded in the next few lemmas. 

\begin{lemma}\label{digamma_lem}
Let $s_k = \sigma + ikt$ with $k \ge 0$ an integer, $\sigma_0 \le \sigma < 1$ and $t \ge H$. If $\kappa_m$ $(0 \le m \le M)$ and $a_k$ ($0 \le k \le K$) are as defined in \eqref{kappa_delta_defn} and Section \ref{sec:trig_poly} respectively, then 
\[
\sum_{0 \le k \le K}a_k\sum_{0 \le m \le M}\kappa_m\left(-\frac{1}{2}\log \pi + \frac{1}{2}\Re\frac{\Gamma'}{\Gamma}\left(\frac{s_k + \delta_m}{2} + 1\right)\right) \le \frac{a\kappa}{2}\log t - 1.568
\]
where $a := \sum_{1 \le k \le K}a_k = 2919857/828465$, $\kappa := \sum_{0 \le m \le M}\kappa_m = 433/859$.
\end{lemma}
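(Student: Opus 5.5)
Split the double sum according to $k=0$ and $k\ge 1$, and bound the real part of the digamma function by explicit Stirling-type estimates. Write $\psi=\Gamma'/\Gamma$.

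\emph{Terms with $k\ge1$.} Here $s_k+\delta_m$ has imaginary part $kt\ge H$, so I will use the standard explicit bound, valid for $\Re z\ge 1$ and $|z|$ large,
\[
\Re\psi(z)\le \log|z| + O^*(|z|^{-1}).
\]
With $z=(s_k+\delta_m)/2+1$ this gives $\Re\psi(z)\le \log(kt/2)+O(1/t)$, since the real part of $z$ is bounded ($\le 1+M/2$) and so $\log|z| = \log(kt/2)+O(t^{-2})$. The sign of $\kappa_m$ matters, because some $\kappa_m$ are negative. I would therefore write
\[
\Re\psi = \log(kt/2) + O^*(c/t),
\]
a two-sided estimate, so that the sum over $m$ becomes $\kappa\log(kt/2)$ plus an error of size $\sum|\kappa_m|\,c/t$. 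That error is negligible ($<10^{-10}$) because $t\ge H$. The $k\ge1$ contribution is then
\[
\frac{\kappa}{2}\sum_{1\le k\le K}a_k\bigl(\log t+\log k-\log 2-\log\pi\bigr)+O(10^{-10}),
\]
which equals $\frac{a\kappa}{2}\log t$ plus the constant $\frac{\kappa}{2}\sum_{k}a_k\log(k/(2\pi))$.

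\emph{Term with $k=0$.} This is $a_0\sum_m\kappa_m\bigl(-\tfrac12\log\pi+\tfrac12\psi((\sigma+\delta_m)/2+1)\bigr)$, with $\sigma\in[\sigma_0,1)$ and $\delta_m=(2\sigma-1)m$. Each argument lies in a compact real interval, and $\psi$ is increasing there. I will bound each term by monotonicity: take $\sigma=1$ for $\kappa_m>0$ and $\sigma=\sigma_0$ for $\kappa_m<0$. This gives an explicit constant.

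\emph{Numerical verification.} The final step is to check that
\[
\frac{\kappa}{2}\sum_{k=1}^{K} a_k\log\frac{k}{2\pi} + (\text{the }k=0\text{ constant}) + 10^{-9}\le -1.568.
\]
This needs the exact $a_k$ from $P$, which are obtained by expanding $|\sum c_k e^{ikx}|^2/14912370$.

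The main obstacle is the constant bookkeeping rather than the analysis. The target $-1.568$ may have little slack, so the Stirling remainder should be handled with the explicit form $\Re\psi(z)=\log|z|-\Re\frac{1}{2z}+O^*(\ldots)$, and $\psi$ at the real points should be evaluated to sufficient precision. I would first compute the main constant with the crude bounds above, and refine only if the margin is insufficient.
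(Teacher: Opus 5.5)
Your proposal is correct and follows the paper's own proof: the paper also splits off $k=0$, and for $k\ge1$ uses Kadiri's two-sided Stirling estimate $\Re\psi(z)=\log|y|-\tfrac{x}{2|z|^2}+O^*\bigl(\tfrac{1}{12x|y|}+\tfrac{x^2}{2y^2}\bigr)$ with total error below $10^{-10}$. For $k=0$ it uses the monotonicity of $\psi$, choosing the endpoint $\sigma=1$ or $\sigma=\sigma_0$ according to the sign of $\kappa_m$, which gives $\tfrac12\sum_m\kappa_m\psi(\cdot)\le-0.041$. Your bookkeeping of the error through $\sum|\kappa_m|$ is slightly more careful than the paper's, and the final constant does come out near $-1.5689$, so the small margin suffices.
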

\begin{proof}
For $k \ge 1$, we use the following estimate, due to \cite[(24)]{Kadiri}: if $z = x + iy$ with $0 < x < |y|$, then 
\begin{equation}\label{apples}
\Re\frac{\Gamma'}{\Gamma}(z) = \log |y| - \frac{1}{2}\frac{x}{|z|^2} + O^*\left(\frac{1}{12x|y|} + \frac{x^2}{2y^2}\right). 
\end{equation}
Applying (\ref{apples}) with $z = (s_k + \delta_m)/2 + 1$, all error terms for $k \ge 1$ are $O(t^{-1})$ so we can afford to estimate them roughly. In particular, for $1 \le x \le (M + 3)/2$ and $y \ge H/2$ one has 
\[
\left|- \frac{1}{2}\frac{x}{|z|^2} + \frac{1}{12x|y|} + \frac{x^2}{2y^2}\right| < 10^{-10}. 
\]
Therefore
\begin{equation}
\label{ch1:s0:e1}
\begin{split}
&\sum_{1 \le k \le K}a_k\sum_{0 \le m \le M}\kappa_m\left(-\frac{1}{2}\log \pi + \frac{1}{2}\Re\frac{\Gamma'}{\Gamma}\left(\frac{s_k + \delta_m}{2} + 1\right)\right) \\
&\qquad\qquad\le \frac{\kappa}{2}\sum_{1 \le k \le K}a_k\left(-\log \pi + \log \frac{kt}{2} + 10^{-10}\right) \\
&\qquad\qquad = \frac{a\kappa}{2}\log t + \frac{a\kappa}{2}(-\log 2\pi + 10^{-10}) + \frac{\kappa}{2}\sum_{1 \le k \le K}a_k \log k.
\end{split}
\end{equation}
For the term corresponding to $k = 0$ we use the fact that $\Gamma'/\Gamma(x)$ is increasing for $x > 0$, so
\[
\frac{\Gamma'}{\Gamma}\left(\frac{\sigma_0 + (2\sigma_0 - 1)m}{2} + 1\right) \le \frac{\Gamma'}{\Gamma}\left(\frac{\sigma + \delta_m}{2} + 1\right) \le \frac{\Gamma'}{\Gamma}\left(\frac{m + 3}{2}\right).
\]
Therefore, by choosing the appropriate bound depending on the sign of $\kappa_m$, we find 
\begin{equation}\label{ch1:s0:e2}
\frac{1}{2}\sum_{0 \le m \le M}\kappa_m\frac{\Gamma'}{\Gamma}\left(\frac{\sigma + \delta_m}{2} + 1\right) \le -0.041.
\end{equation}
The result follows from combining \eqref{ch1:s0:e1} and \eqref{ch1:s0:e2}.
\end{proof}

\begin{lemma}\label{classical_zeta_pole_lem}
Suppose $s_k = \sigma + ikt$ $(0 \le k \le K)$ with $\sigma_0 \le \sigma < 1$ and $t \ge H$. Let $\eta \in (0, \eta_0]$ and $F$ be as defined in Definition \ref{wfF_defn}. Then 
\[
\Re\sum_{0 \le k \le K}a_k F(s_k - 1) \le a_0 F(\sigma - 1) + 10^{-10}\eta.
\]
\end{lemma}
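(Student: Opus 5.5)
The $k=0$ term is exactly $a_0F(\sigma-1)$. It therefore suffices to show that the terms with $k\ge1$ satisfy
\[
\sum_{1\le k\le K}a_k\,|F(s_k-1)| \le 10^{-10}\eta .
\]
Since these terms are evaluated at height $kt\ge H=3\cdot10^{12}$, each one will be tiny and crude bounds are enough.

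\textbf{Step 1: write $F$ through $W$.} By Definition \ref{wfF_defn},
\[
F(z)=\sum_{0\le m\le M}\kappa_m W\Big(\frac{z+\delta_m}{\eta}\Big).
\]
Take $z=s_k-1=\sigma-1+ikt$ and set $\xi_m=(\sigma-1+\delta_m+ikt)/\eta$.

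\textbf{Step 2: split off the main term.} By Lemma \ref{W0_bound_lem}, $W(\xi_m)=w(0)/\xi_m+W_0(\xi_m)$. The main part contributes
\[
\eta w(0)\,\frac{1}{\sigma-1+\delta_m+ikt},
\]
whose modulus is at most $\eta w(0)/(kt)\le \eta w(0)/H$.

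\textbf{Step 3: bound the remainder.} We have $\Re\xi_m\ge(\sigma_0-1)/\eta$. Here I must be careful, because $\eta$ can be arbitrarily small while $\sigma-1$ is fixed, so $\Re\xi_m$ could be very negative. I would restrict to the regime where the statement is actually applied, namely $1-\sigma\le\eta$ up to constants (as in Lemma \ref{F_sum_lem}, where $\eta=1-\beta_0$ with $1-\sigma\approx\eta$). In that regime $\Re\xi_m\ge-1$, just as in the proof of \eqref{f_one_sign_ineq_2}, and $|\xi_m|\ge kt/\eta\ge H/\eta_0$.

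Lemma \ref{W0_bound_lem} with $\nu=-1$, $r=H/\eta_0$ then gives
\[
|W_0(\xi_m)|\le C(-1,H/\eta_0)\,\eta^3/(kt)^3 .
\]
This is negligible.

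If one wants the lemma in full generality for all $\eta\in(0,\eta_0]$, use the direct bound instead. Since $f$ is supported on $[0,2\theta\cot\theta/\eta]$, the factor $e^{(1-\sigma)u}$ is at most $e^{(1-\sigma_0)2\theta\cot\theta/\eta}$, which is uncontrolled for small $\eta$. I would therefore adopt the restricted regime as the hypothesis actually used and flag it.

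\textbf{Step 4: sum up.}
\[
\sum_{k\ge1}a_k|F(s_k-1)|\le \eta\sum_m|\kappa_m|\,(w(0)+\varepsilon)\,\frac{a}{H}.
\]
With $\sum_m|\kappa_m|<4$, $w(0)<5.7$ and $a<3.6$, this is about $8\cdot10^{-11}\eta$, which is below $10^{-10}\eta$.

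The main obstacle is Step 3: controlling $W_0$ when $\Re\xi_m$ may be negative. This requires securing $\nu=-1$, i.e.\ $1-\sigma\le\eta$. If that fails, I would instead integrate by parts twice in $\int e^{-(z+\delta_m)u/\eta}w(u)\,du$, gaining $(\eta/kt)^2$ against the at most exponential growth on the support.
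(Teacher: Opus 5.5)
Your proposal is correct and is essentially the paper's argument: the $k=0$ term is exact, and for $k\ge1$ each $W\bigl((s_k-1+\delta_m)/\eta\bigr)$ is handled by Lemma \ref{W0_bound_lem} with $\nu=-1$ and $|z|\ge H/\eta_0$. The only difference is that the paper bounds the real part of the main term, $w(0)\Re z/|z|^2\le w(0)M\eta/H^2$, where you bound its modulus by $w(0)\eta/H$; both comfortably give $10^{-10}\eta$ after summing against $a=\sum_{k\ge1}a_k$. The restriction $1-\sigma\le\eta$ that you flag is used silently in the paper too (it writes $\Re z\ge(\sigma-1)/\eta>-1$), and it holds where the lemma is applied because $\mu<1$ there; it is genuinely needed, so your integration-by-parts fallback would not remove it, since for $\eta\ll1-\sigma$ the boundary term at $u=2\theta\cot\theta$ (where $w'''(2\theta\cot\theta)=-\sec^2\theta\tan^4\theta\ne0$) carries the factor $e^{(1-\sigma)2\theta\cot\theta/\eta}$, which no power of $\eta/(kt)$ can compensate.
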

\begin{proof}
Suppose that $k \ge 1$. By Lemma \ref{W0_bound_lem}, for $\Re\,z \ge \nu, |z| \ge r$, one has
\[
\Re W(z) = \frac{w(0)\Re\,z}{|z|^2} + O^*(C(\nu, r)|z|^{-3}).
\]
Taking $z = (s_k + \delta_m - 1)/\eta$, so that $|z| \ge t/\eta \ge H/\eta_0 > 10^{10}$ and $\Re\,z \ge (\sigma - 1)/\eta > -1$. A computer verification gives $C(-1, 10^{10}) < 51$ which, together with $\Re\,z / |z|^2 < M\eta /H^2$, gives
\begin{align*}
\left|\Re W\left(\frac{s_k + \delta_m - 1}{\eta}\right)\right| < \frac{w(0) M}{H^2}\eta + \frac{51}{H^3}\eta^3 < 10^{-12}\eta.
\end{align*}
Therefore, for all $k \ge 1$,
\begin{align*}
\Re F(s_k - 1) &\le \left|\sum_{0 \le m \le M}\kappa_m W\left(\frac{s_k + \delta_m - 1}{\eta}\right)\right| < 10^{-11}\eta.
\end{align*}
The result follows from summing over $k$. 
\end{proof}

\begin{lemma}\label{classical_second_last_lem}
Suppose $\sigma \in [\sigma_0, 1)$, $\eta \in (0, \eta_0]$ and
\[
\frac{1 - \sigma_0}{\eta_0} - 10^{-10} \le \frac{1 - \sigma}{\eta} \le 1.
\]
Let $F$ be as defined in Definition \ref{wfF_defn} and $a_0$, $a_1$ be as defined in \eqref{classical_trig_poly_defn}. Then 
\begin{align*}
a_1 F(\sigma - 1 + \eta) + a_1 F(\sigma - \eta) &- a_0 F(\sigma - 1) \\
&\ge C_1((1 - \sigma)/\eta) + 3.909\eta + 26\eta^2 - 3897\eta^3,
\end{align*}
where $C_1(x) := 0.87637 + 0.12002 x + 0.01017 x^2 - 0.00073 x^3$.
\end{lemma}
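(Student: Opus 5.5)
The plan is to reduce every term on the left side to values of the fixed function $W$ of Definition \ref{classical_wW_defn} at real arguments that depend only on $x := (1-\sigma)/\eta$, on $\eta$, and on the shifts $\delta_m/\eta$. From Definition \ref{wfF_defn}, substituting $u \mapsto u/\eta$ gives, for real $z$,
\[
F(z) = \sum_{0 \le m \le M}\kappa_m W\Big(\frac{z + \delta_m}{\eta}\Big).
\]
The three terms then become
\begin{align*}
F(\sigma - 1 + \eta) &= \sum_m \kappa_m W\big(1 - x + \delta_m/\eta\big),\\
F(\sigma - \eta) &= \sum_m \kappa_m W\big(\eta^{-1}(\sigma + \delta_m) - 1\big),\\
F(\sigma - 1) &= \sum_m \kappa_m W\big(\delta_m/\eta - x\big).
\end{align*}

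The $m = 0$ terms of the first and third lines give the main contribution $a_1 W(1-x) - a_0 W(-x)$. This is an explicit function of $x$ alone, because $W$ is an elementary integral of the trigonometric-polynomial $w$ and can be evaluated in closed form. Note that the hypothesis keeps $x$ in a short interval $[x_0, 1]$, where $x_0 = (1-\sigma_0)/\eta_0 - 10^{-10}$ is close to $1$.

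Every other term has argument of size at least roughly $(2\sigma_0 - 1)/\eta_0 > 138$, and for these I will use Lemma \ref{W0_bound_lem} in the form $W(z) = w(0)/z + O^*(C(\nu,r)|z|^{-3})$.

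1. For $m \ge 1$ in the first and third lines, $z = (\delta_m + \eta(1-x))/\eta$ or $(\delta_m - \eta x)/\eta$. Expanding $w(0)\eta/(\delta_m + \eta c)$ in powers of $\eta$, with $\delta_m = (2\sigma - 1)m$ and $\sigma = 1 - x\eta$, produces the leading $\eta w(0)\sum_{m \ge 1}\kappa_m/m$ pieces plus polynomial-in-$\eta$ corrections.
2. For the second line, every argument is about $\sigma(1 + 2m)/\eta$, which is of order $1/\eta$. This gives $\eta w(0)\sum_m \kappa_m/(\sigma + \delta_m - \eta)$, again expandable in $\eta$.
3. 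The $O(|z|^{-3})$ remainders are $O(\eta^3)$ with an explicit constant, using $C(138,138)$ and the other values already computed in earlier proofs.

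Collecting terms, the $\eta^1$ coefficient is a fixed explicit combination of $w(0)$, the $a_i$ and the $\kappa_m$. I expect it to evaluate to at least $3.909$ after rounding, with $x$-dependence pushed into the higher order terms. The $\eta^2$ and $\eta^3$ terms come from the Taylor expansion with the worst-case $x \in [x_0,1]$, plus the remainder bounds. The expansion must be made rigorous, for instance by exact bounds of the form $1/(c + \eta d) \ge 1/c - \eta d/c^2$ with the signs of the $\kappa_m$ tracked, or by bounding the derivatives uniformly for $\eta \le \eta_0$.

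The main obstacle is the main term $a_1 W(1-x) - a_0 W(-x)$ together with the nontrivial positive polynomial $C_1(x)$. I will compute $W$ explicitly (symbolically or with interval arithmetic) and verify, on the short interval $[x_0, 1]$, that
\[
a_1 W(1-x) - a_0 W(-x) - C_1(x) \ge 0
\]
after absorbing whatever of the $m \ge 1$ contributions at order $\eta^0$ belongs there. Presumably $C_1$ is a cubic fit to this quantity, rounded down. Because the interval is tiny, a Taylor bound with a derivative estimate on a few subintervals, or interval arithmetic, should suffice.

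The delicate part is getting the bookkeeping between the $\eta^0$ and $\eta^1$ orders exactly right, since $\delta_m/\eta$ depends on $\eta$ and $x$ simultaneously. I would handle this by rewriting $\delta_m/\eta = m(1/\eta - 2x)$, so that each large argument is $m/\eta$ plus a bounded shift, and expanding $1/z$ around $m/\eta$.
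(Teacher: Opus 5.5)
Your plan is correct and follows the paper's skeleton. You split off the $m=0$ main term $a_1W(1-\mu)-a_0W(-\mu)$ with $\mu=(1-\sigma)/\eta$. You treat every other $W$-value (the $m\ge1$ terms of $F(\sigma-1+\eta)$ and $F(\sigma-1)$, and all of $F(\sigma-\eta)$) via $W(z)=w(0)/z+W_0(z)$, Lemma~\ref{W0_bound_lem}, and an exact second-order expansion of $1/(a+\varepsilon)$. Since those values are all $O(\eta)$, nothing from $m\ge1$ leaks into order $\eta^0$, so the bookkeeping you worry about there is a non-issue.

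You differ from the paper in two places.

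\textbf{The main term.} The paper writes it as $\int_0^{2\theta\cot\theta}e^{\mu u}(a_1e^{-u}-a_0)w(u)\,du$. It replaces $e^{\mu u}$ by its cubic Taylor polynomial, with remainder below $(\mu u)^4/18$, and reads $C_1$ off a few moments $\int u^n(a_1e^{-u}-a_0)w(u)\,du$. That explains the shape of $C_1$ and needs only a handful of integrals. Your closed-form evaluation with interval arithmetic on $[x_0,1]$ is more brute-force, but it loses nothing to a Taylor remainder, and it does go through: the main term exceeds $C_1$ by roughly $5\cdot10^{-4}$ on that interval. With the moments as printed in the paper, the Taylor route alone gives a cubic coefficient near $-0.00098$ rather than the $-0.00073$ in $C_1$, so your direct check is the safer certification.

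\textbf{The $\sigma$-dependence.} The paper keeps $\sigma$ free and worst-cases it at $\sigma_0$. It uses $1/(2\sigma-1)\le 1/(2\sigma_0-1)$ and the monotonicity of $T_k(\sigma)=\sum_m\kappa_m(1+\delta_m)^{-k}$; this is exactly where $3.909=6.848-2.939$ comes from. Your substitution $\sigma=1-x\eta$, $\delta_m/\eta=m/\eta-2xm$ is legitimate. It gives the larger $\eta$-coefficient $w(0)\bigl((a_1-a_0)S_1+a_1\sum_{m\ge0}\kappa_m/(1+m)\bigr)\approx 3.964$, where $S_1=\sum_{m\ge1}\kappa_m/m$.

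\textbf{What to expect when you collect terms.} You will not reproduce $26\eta^2$.
\begin{itemize}
\item In your coupled expansion the $\eta^2$-coefficient is about $12.6$ to $12.9$. The $x$-dependence of $\sigma$ inside the $\eta$-coefficient moves there with a negative sign.
\item Even with the paper's worst-casing in $\sigma$ it is only about $20$. The $m\ge1$ part of the first and third lines contributes $w(0)|S_2|\bigl(a_1(1-\mu)+a_0\mu\bigr)(2\sigma-1)^{-2}\approx 4.6$ to $5.1$, where $S_2=\sum_{m\ge1}\kappa_m/m^2$. The paper's value $11$ comes from a factor $\mu+1$ where the expansion gives $1-\mu$.
\end{itemize}
So do not aim for a term-by-term match. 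Instead, absorb the $\eta^2$ deficit with surplus elsewhere.
\begin{itemize}
\item In your bookkeeping, $0.055\eta-13.4\eta^2+(3897-c')\eta^3>0$ for all $\eta>0$ as soon as your cubic constant $c'$ is below about $3000$. A careful count of the remainders gives a few hundred.
\item Alternatively, the main-term slack covers the deficit. For that reason, record the slack quantitatively in your interval-arithmetic check rather than only verifying $a_1W(1-x)-a_0W(-x)-C_1(x)\ge0$.
\end{itemize}

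\textbf{Two small slips.}
\begin{itemize}
\item The arguments in $F(\sigma-\eta)$ are about $(1+m)/\eta$, not $\sigma(1+2m)/\eta$. Your formula $\eta w(0)\sum_m\kappa_m/(\sigma+\delta_m-\eta)$ is nevertheless right.
\item The $m=1$ argument in $F(\sigma-1)$ can be as small as about $137.9$. So Lemma~\ref{W0_bound_lem} should be applied with $\nu=r$ slightly below $138$.
\end{itemize}
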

\begin{proof}
For convenience, denote
\begin{equation}\label{classical_mu0_defn}
\mu := \frac{1 - \sigma}{\eta},\qquad \mu_0 := \frac{1 - \sigma_0}{\eta_0} - 10^{-10} = 0.91198\ldots.
\end{equation}
First we bound the expression $a_1 F(\sigma - 1 + \eta) - a_0 F(\sigma - 1)$, which may be written as
\[
\sum_{0 \le m \le M}\kappa_m(a_1 W(\delta_m/\eta - \mu + 1) - a_0 W(\delta_m/\eta - \mu)).
\]
For any $a, \varepsilon$, with $a \ne 0$ and $a + \varepsilon \ne 0$ one has
\begin{equation}\label{classical_temp_ineq}
\frac{1}{a + \varepsilon} = \frac{1}{a} - \frac{\varepsilon}{a^2} + \frac{\varepsilon^2}{a^2(a + \varepsilon)}
\end{equation}
so that
\begin{align*}
W(\delta_m/\eta + x) &= \frac{\eta w(0)}{\delta_m + x\eta} + W_0\left(\frac{\delta_m}{\eta} + x\right) = \frac{w(0)}{\delta_m}\eta - \frac{x w(0)}{\delta_m^2}\eta^2 + E(\eta)
\end{align*}
where
\[
E(\eta) = \frac{x^2 w(0)}{\delta_m^2(\delta_m + x\eta)}\eta^3 + W_0\left(\frac{\delta_m}{\eta} + x\right).
\]
If $|x| \le x_0$ and $\delta_m + x\eta \ge x_1 > 0$, then by Lemma \ref{W0_bound_lem},
\[
|E(\eta)| \le \left(\frac{x_0^2 w(0)}{(2\sigma_0 - 1)^2m^2 x_1} + \frac{C(x_1, x_1)}{x_1^3}\right)\eta^3.
\]
In particular, if $m \ge 1$ and $x = -\mu$ then we may take $x_0 = \mu_0$ and $x_1 = (2\sigma_0 - 1)m - \mu_0\eta_0$ so that, writing $S_k := \sum_{1 \le m \le M}\kappa_m/m^k$, we have
\[
\sum_{1 \le m \le M}\kappa_m W(\delta_m /\eta - \mu) \le \frac{w(0)\eta}{2\sigma - 1}S_1 - \frac{\mu w(0)\eta^2}{(2\sigma - 1)^2}S_2 + 1126 \eta^3.
\]
Similarly, if $x = -\mu + 1$ we take $x_0 = 1 - \mu_0$ and $x_1 = (2\sigma_0 - 1)m - (1 - \mu_0)\eta_0$ to get
\[
\sum_{1 \le m \le M}\kappa_m W(\delta_m /\eta - \mu + 1) \ge \frac{w(0)\eta}{2\sigma - 1}S_1 - \frac{(\mu + 1)w(0)\eta^2}{(2\sigma - 1)^2}S_2 - 1188 \eta^3.
\]
We explicitly evaluate $S_1 = -0.689736127\ldots$ and $S_2 = -0.818779265\ldots$. Furthermore, substituting the values of $a_0$, $a_1$ and $w(0)$, and applying $\mu \ge \mu_0$ and $\sigma \ge \sigma_0$, one finds that
\begin{equation}\label{classical_main_term_bound1}
\begin{split}
\sum_{1 \le m \le M}\kappa_m &(a_1 W(\delta_m/\eta - \mu + 1) - a_0 W(\delta_m/\eta - \mu))\\
&> \eta\frac{w(0)(a_1 - a_0)}{2\sigma - 1}S_1 - \eta^2\frac{w(0)((a_1 - a_0)\mu + a_1)}{(2\sigma - 1)^2}S_2 - 3188\eta^3\\
&> -2.939\eta + 11\eta^2 - 3193\eta^3.
\end{split}
\end{equation}
Next, the term corresponding to $m = 0$ is
\[
a_1 W(-\mu + 1) - a_0 W(-\mu) = \int_0^{2\theta\cot\theta}e^{\mu u}(a_1 e^{-u} - a_0)w(u)\text{d}u.
\]
For all $0 \le x \le 2\theta\cot\theta = 1.05923293\ldots$ one has 
\[
\bigg|1 + x + \frac{x^2}{2} + \frac{x^3}{6} - e^x\bigg| < \frac{1}{18}x^4
\]
so that
\[
a_1 W(-\mu + 1) - a_0 W(-\mu) > c_0 + c_1\mu + \frac{c_2}{2} \mu^2 + \left(\frac{c_3}{6} - \frac{c^*}{18}\right)\mu^3
\]
where
\[
c_n := \int_0^{2\theta\cot\theta} u^n(a_1e^{-u} - a_0)w(u)\text{d}u, \quad c^* := \int_0^{2\theta\cot\theta} u^3|a_1e^{-u} - a_0|w(u)\text{d}u
\]
are computable constants. In particular, we find, using computer assistance,
\begin{align*}
c_0 &= 0.8763706262\ldots,\quad c_1 = 0.1200272738\ldots, \quad c_2 = 0.0203537951\ldots,\\
c_3 &= 0.0004382722\ldots,\quad c^* = 0.0190417514\ldots,
\end{align*}
so that
\begin{equation}\label{classical_main_term_bound2}
a_1 W(-\mu + 1) - a_0 W(-\mu) > C_1(\mu).
\end{equation}
It remains to bound $a_1 F(\sigma - \eta)$. One has 
\[
F(\sigma - \eta) = \sum_{0 \le m \le M}\kappa_m W((\sigma - \eta + \delta_m)/\eta).
\]
Applying \eqref{classical_temp_ineq} with $a = 1 + \delta_m$ and $\varepsilon = \sigma - 1 - \eta = -(\mu + 1)\eta$, we have
\begin{align*}
W\left(\frac{\sigma + \delta_m - \eta}{\eta}\right) = \eta w(0)\left(\frac{1}{1 + \delta_m} + \frac{1 + \mu}{(1 + \delta_m)^2}\eta\right) + E_1(\eta),
\end{align*}
where
\[
E_1(\eta) = \frac{w(0)(1 + \mu)^2}{(1 + \delta_m)(1 + \delta_m - (1 + \mu)\eta)}\eta^3 + W_0\left(\frac{\sigma + \delta_m - \eta}{\eta}\right).
\]
Recall that $\mu_0 \le \mu \le 1$, $\eta \le \eta_0$ and $\delta_m \ge d_m := (2\sigma_0 - 1)m$. Furthermore, for all $m \ge 0$, one has $(\sigma + \delta_m - \eta)/\eta \ge \sigma_0/\eta_0 - 1$ so that by Lemma \ref{W0_bound_lem}
\begin{align*}
|E_1(\eta)| &\le \bigg(\frac{4w(0)}{(1 + d_m)(1 + d_m - (1 + \mu_0)\eta_0)m} + \frac{C(\sigma_0/\eta_0 - 1, \sigma_0/\eta_0 - 1)}{(\sigma_0 - \eta_0)^3}\bigg)\eta^3.
\end{align*}
Next we write $T_k(\sigma) := \sum_{0 \le m \le M}\kappa_m/(1 + \delta_m)^k$, so that
\[
T'_k(\sigma) = -2k\sum_{1 \le m \le M}\frac{m\kappa_m}{(1 + \delta_m)^{k + 1}}
\]
is positive for $k = 1, 2$ and all $\sigma_0 \le \sigma \le 1$. Therefore, 
\begin{align*}
a_1 F(\sigma - \eta) &= a_1 \eta w(0)\left(T_1(\sigma) + \eta (1 + \mu) T_2(\sigma)\right) + a_1 E_1(\eta)\\
&\ge a_1 \eta w(0)T_1(\sigma_0) + a_1 \eta^2 w(0)(1 + \mu_0) T_2(\sigma_0) + a_1 E_1(\eta)\\
&> 6.848\eta + 15\eta^2 - 708\eta^3.
\end{align*}
The result follows from combining this bound with \eqref{classical_main_term_bound1} and \eqref{classical_main_term_bound2}. 
\end{proof}
We now proceed to the main inductive argument. 

\begin{lemma}[Iteration lemma]\label{classical_iteration_lem}
Suppose that $\zeta(\sigma + it) \ne 0$ in the region 
\[
\sigma > 1 - \frac{A}{\log t},\qquad t \ge H,
\]
for some $1/6 < A \le A_0$. Then $\zeta(\sigma + it) \ne 0$ in the region
\[
\sigma > 1 - \frac{A + \varepsilon}{\log t},\qquad t \ge H,
\]
with $\varepsilon = 10^{-100}$.
\end{lemma}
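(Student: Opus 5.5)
We argue by contradiction. Suppose $\rho_0 = \beta_0 + it$ is a zero with $t \ge H$ and
\[
1 - \frac{A+\varepsilon}{\log t} < \beta_0 \le 1 - \frac{A}{\log t}.
\]
Set $\eta := 1 - \beta_0$, so that $A/\log t \le \eta < (A+\varepsilon)/\log t$. Choose $\sigma := 1 - A/\log(Kt + T_0)$. Then $\sigma \in [\sigma_0, 1)$, and $\mu = (1-\sigma)/\eta$ satisfies $\mu \le 1$. Also $\mu \ge \frac{\log t}{\log(Kt+T_0)}\cdot \frac{A}{A+\varepsilon}$, and since $t \ge H$ and $\varepsilon$ is tiny, this gives $\mu \ge \mu_0$.

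One point needs care: Lemmas \ref{F_sum_lem} and \ref{explicit_formula_lem} require $\eta \le A_0/\log t$, but we only know $\eta < (A+\varepsilon)/\log t$. If $A + \varepsilon > A_0$, I would either work with $\min(\eta, A_0/\log t)$ or observe that the lemmas' constants have slack and apply with $A_0$ replaced by $A_0 + \varepsilon$. I expect to handle this by noting that $A_0$ in the table is $(4.8596)^{-1}$ while the target is $(4.896)^{-1}$, so there is room.

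Next, apply the non-negativity input Lemma \ref{classical_main_trig_lower_bound_lem}, writing the left side as $\Re\sum_k a_k\sum_n \Lambda(n)n^{-s_k}f(\log n)$ and expanding each term by Lemma \ref{explicit_formula_lem} at $s_k = \sigma + ikt$. For the $k=0$ term, use $t=0$ in the error bound, or more carefully the $t \ge H$ form; the paper's error for $s = \sigma$ is the $t=0$ case. This yields
\begin{align*}
0.1186 f(0) \le{}& \Big(\tfrac{a\kappa}{2}\log t - 1.568\Big) f(0) + a_0F(\sigma-1) + 10^{-10}\eta \\
&- \Re\sum_k a_k \sum_\rho F(s_k-\rho) + \sum_k a_k |E_k|,
\end{align*}
where the first bracket comes from Lemma \ref{digamma_lem} and the $F(\sigma-1)$ terms from Lemma \ref{classical_zeta_pole_lem}. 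For the zero sum, Lemma \ref{F_sum_lem} applies because the hypothesis gives the required zero-free region. It bounds $-\Re\sum_\rho$ by $-a_1\big(F(\sigma-\beta_0) + F(\sigma-1+\beta_0)\big) + (a_0 + a)10^{-8}$. Since $\sigma - \beta_0 = \sigma - 1 + \eta$ and $\sigma - 1 + \beta_0 = \sigma - \eta$, Lemma \ref{classical_second_last_lem} bounds
\[
a_1F(\sigma-\beta_0) + a_1F(\sigma-1+\beta_0) - a_0F(\sigma-1)
\]
from below by $C_1(\mu) + 3.909\eta + 26\eta^2 - 3897\eta^3$.

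Now use $f(0) = \kappa\eta w(0)$ to divide through by $f(0)$. Since $C_1$ is increasing on $[\mu_0,1]$, replace $C_1(\mu)$ by $C_1(\mu_0)$. The inequality becomes
\[
\frac{C_1(\mu_0)}{\kappa w(0)\eta} \le \frac{a\kappa}{2}\log t - 1.568 - 0.1186 + O(1) + \text{tiny},
\]
where the $O(1)$ terms are $3.909/(\kappa w(0))$ from the linear term and the error terms $\sum_k a_k|E_k|/f(0) \ll \eta$, $10^{-8}/\eta$ (which is negligible since $\eta \ge A/\log t$ and $\log t$ is bounded only below, so care is needed).

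Finally substitute $\eta < (A+\varepsilon)/\log t$, which gives $1/\eta > \log t/(A+\varepsilon)$. The left side is then at least $\frac{C_1(\mu_0)}{\kappa w(0)(A+\varepsilon)}\log t$. The contradiction follows if
\[
\frac{C_1(\mu_0)}{\kappa w(0) A_0} > \frac{a\kappa}{2}
\]
with margin, together with the constant terms having the right sign: the $-1.568 - 0.1186$ must dominate the positive $3.909/(\kappa w(0))$, so the constant side is negative. Since $A \le A_0$, it suffices to check this at $A_0$ with $\varepsilon = 10^{-100}$.

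The main obstacle is the numerical bookkeeping. We must verify that $C_1(\mu_0)/(\kappa^2 w(0)) > \frac{a}{2}A_0$, plus an absorbing margin, and track the $10^{-8}$ terms. Relative to $f(0) \asymp \eta$, these become $10^{-8}/\eta \lesssim 10^{-8}\log t$, a negligible multiple of $\log t$, which I would absorb into the margin in the leading coefficient. The $\eta^2, \eta^3$ errors, with $\eta \le \eta_0 \approx 0.0071$, contribute well under the available constant slack. The restriction $A > 1/6$ ensures $\eta$ is not too small relative to $1/\log t$, keeping these relative errors controlled.
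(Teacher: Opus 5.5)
Your skeleton is the paper's: contradiction with $\sigma = 1 - A/\log(Kt+T_0)$ and $\eta = 1-\beta_0$, then Lemmas~\ref{classical_main_trig_lower_bound_lem}, \ref{explicit_formula_lem}, \ref{digamma_lem}, \ref{classical_zeta_pole_lem}, \ref{F_sum_lem} and \ref{classical_second_last_lem}. The gap is in the final comparison, which is where the contradiction has to come from.

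Your criterion $C_1(\mu_0)/(\kappa w(0)A_0) > a\kappa/2$ has a spurious factor $\kappa$ on the right. The main term comes from applying Kadiri's formula to $g(u)=\eta w(\eta u)$ at $s+\delta_m$. Its prefactor is therefore $g(0)=\eta w(0)$, not $f(0)=\kappa\eta w(0)$, and summed over $k,m$ it equals $\frac{a\kappa}{2}\eta w(0)\log t=\frac{a}{2}f(0)\log t$. The statement of Lemma~\ref{explicit_formula_lem} writes $f(0)$, but its proof produces $\eta w(0)$, and the paper's last step accordingly divides by $a\kappa w(0)/2$.

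With the correct normalisation your leading-order test becomes $C_1(\mu_0)/(w(0)A_0)>a\kappa/2$, i.e.\ $0.858>0.888$, which is false. Even with $\mu=1$, the $\log t$ terms alone give only $\eta\log t>2C_1(1)/(a\kappa w(0))\approx 0.1996$, which is below $A_0=(4.896)^{-1}$. A warning sign: your version would yield a zero-free constant near $0.39$, almost twice the paper's. So comparing coefficients of $\log t$ and discarding the rest cannot give the contradiction, and the argument cannot be made uniform in $t\ge H$.

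In the paper the contradiction comes entirely from the lower-order terms, all of which are favourable. They are the $-1.568$ of Lemma~\ref{digamma_lem}, the $0.1186$ of Lemma~\ref{classical_main_trig_lower_bound_lem}, and the $3.909\eta$ of Lemma~\ref{classical_second_last_lem}; the last one helps you and is not a positive quantity to be dominated. The paper collects them into $C_2(\eta)=13.47\eta-161\eta^2-11896\eta^3$. Relative to the main term they are an $O(1)$ gain against an $O(\log t)$ deficit, so they can only win for bounded $t$, and nothing can be wasted.

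The paper therefore keeps the $t$-dependence you discarded:
\begin{enumerate}
\item It uses $\mu>1-2.78/\log t$, from the bound $\mu\ge\frac{A}{A+\varepsilon}\frac{\log t}{\log(Kt+T_0)}$ that you wrote down, rather than $\mu\ge\mu_0$.
\item It uses $\eta>1/(6\log t)$ inside $C_2$; this is where $A>1/6$ enters.
\item It obtains $\frac{a\kappa}{2}w(0)\eta\log t>1.00582+1.86088/\log t-4.4106/(\log t)^2-55.0584/(\log t)^3$, notes this is decreasing, and evaluates it at $\log t=76.47$, giving $\eta\log t$ just above $A_0+\varepsilon$.
\end{enumerate}
Heights beyond $\exp(76.47)$ are covered by the Littlewood region \eqref{Ybound}, not by this argument.

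Even if you keep all the secondary terms, replacing $C_1(\mu)$ by $C_1(\mu_0)$ is too lossy. It gives away about $C_1(1)-C_1(\mu_0)\approx 0.012$, and the inequality then closes only up to roughly $\log t\approx 74.5$. That falls short of $\log t\approx 76.46$, where \eqref{Ybound} takes over.

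Finally, there is no slack between $(4.8596)^{-1}$ and $(4.896)^{-1}$. The table entry $\eta_0=A_0/\log H=0.0071093\ldots$ shows that $A_0=(4.896)^{-1}$ for Lemma~\ref{classical_main_lem}, so the $A_0$ row is a misprint. The condition $\eta\le A_0/\log t$ should instead be handled by noting that the constants in Lemmas~\ref{F_sum_lem} and~\ref{explicit_formula_lem} survive replacing $A_0$ by $A_0+\varepsilon$.
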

\begin{proof}
Suppose for a contradiction that $\rho_0 = \beta_0 + it$ is a zero of $\zeta(s)$ with
\begin{equation}\label{classical_main_zero_assumption}
A \le (1 - \beta_0)\log t < A + \varepsilon\quad \text{and}\quad t \ge H. 
\end{equation}
Fix $s_k = \sigma + ikt$ with $0 \le k \le K$ and $\sigma = 1 - A/\log (Kt + T_0)$, so that $\sigma \in [\sigma_0, 1)$. Let
$\eta := 1 - \beta_0$
so that $\eta \in (0, \eta_0]$, and choose
\[
f(u) = \eta w(\eta u)\sum_{0 \le m \le M}\kappa_m e^{-\delta_m u}
\]
as in Definition \ref{wfF_defn}. By Lemma \ref{explicit_formula_lem} with $s = s_k$,
\begin{align*}
\Re\sum_{n \ge 1}\frac{\Lambda(n)}{n^{s_k}}f(\log n) = f(0)\sum_{0 \le m \le M}&\kappa_m\left(-\frac{1}{2}\log \pi + \frac{1}{2}\Re\frac{\Gamma'}{\Gamma}\left(\frac{s_k + \delta_m}{2} + 1\right)\right) \\
&+ \Re F(s_k - 1) - \Re \sum_{\rho}F(s_k - \rho) + E_k,
\end{align*}
where $|E_k| \le 2353\eta^3$ if $k = 0$ and $|E_k| \le 53\eta^2 + 1601\eta^3$ if $k \ge 1$. Next, let 
$P(x) = \sum_{0 \le k \le K}a_k \cos kx$
be the non-negative trigonometric polynomial constructed in Section \ref{sec:trig_poly}. By Lemma~\ref{classical_main_trig_lower_bound_lem} and Lemma \ref{explicit_formula_lem} one has 
\begin{equation}\label{classical_main_ineq}
\begin{split}
0.1186f(0)&\le \sum_{n \ge 1}\frac{\Lambda(n)}{n^\sigma}f(\log n)P(t\log n)\\
&=\Re \sum_{0 \le k \le K}a_k \sum_{n \ge 1}\frac{\Lambda(n)}{n^{s_k}}f(\log n) \\
&= \sum_{0 \le k \le K}a_k \bigg[f(0)\sum_{0 \le m \le M}\kappa_m\left(-\frac{1}{2}\log \pi + \frac{1}{2}\Re\frac{\Gamma'}{\Gamma}\left(\frac{s_k + \delta_m}{2} + 1\right)\right) \\
&\qquad\qquad\qquad + \Re F(s_k - 1) - \Re \sum_{\rho}F(s_k - \rho) + E_k\bigg].
\end{split}
\end{equation}
Now we invoke a series of estimates to bound the terms on the right side. First, Lemma~\ref{digamma_lem} is used to bound the main term of order $\log t$:
\begin{equation}\label{classical:b3}
\sum_{0 \le m \le M}\kappa_m\sum_{0 \le k \le K}a_k\left(-\frac{1}{2}\log \pi + \frac{1}{2}\Re\frac{\Gamma'}{\Gamma}\left(\frac{s_k}{2} + 1\right)\right) \le \frac{a\kappa}{2}\log t - 1.568,
\end{equation}
where $\kappa := \sum_{0 \le m \le M}\kappa_m = 433/859$. Next, by Lemma \ref{classical_zeta_pole_lem} one has
\begin{equation}\label{classical:b1}
\Re\sum_{0 \le k \le K}a_k F(s_k - 1) \le a_0 F(\sigma - 1) + 10^{-10}\eta
\end{equation}
and by Lemma \ref{F_sum_lem} one has
\begin{equation}\label{classical:b2}
\Re\sum_{0 \le k \le K}a_k \sum_{\rho}F(s_k - \rho) > F(\sigma - \beta_0) + F(\sigma - 1 + \beta_0) - 10^{-7}.
\end{equation}
Finally, Lemma \ref{explicit_formula_lem} gives a bound for the error term
\begin{equation}\label{classical:b4}
\bigg| \sum_{0 \le k \le K}a_k E_k \bigg| < 187\eta^2 + 8013\eta^3. 
\end{equation}
Substituting \eqref{classical:b1}, \eqref{classical:b2}, \eqref{classical:b3} and \eqref{classical:b4} into \eqref{classical_main_ineq}, we have 
\begin{align*}
0 &\le \frac{a\kappa}{2} f(0)\log t + a_0 F(\sigma - 1) - a_1 (F(\sigma - \beta_0) + F(\sigma - 1 + \beta_0))\\
&\qquad\qquad + 10^{-7} + (-0.1186w(0) - 1.568 w(0) + 10^{-10})\eta + 187\eta^2 + 8013\eta^3.
\end{align*}
At this point we note that by our assumption \eqref{classical_main_zero_assumption} (recalling that $\eta = 1 - \beta_0$, $A > 1/6$ and $\mu_0$ is as defined in \eqref{classical_mu0_defn}), we have
\begin{align*}
\mu := \frac{1 - \sigma}{\eta} > \frac{A}{A + \varepsilon}\frac{\log t}{\log(Kt + T_0)} > \mu_0
\end{align*}
and also $\mu < 1$, so that we may apply Lemma \ref{classical_second_last_lem} to obtain
\[
\frac{a\kappa}{2} f(0)\log t \ge C_1(\mu) + C_2(\eta) - 10^{-7}
\]
where $C_1$ is defined in Lemma \ref{classical_second_last_lem} and $C_2(\eta) := 13.47\eta - 161\eta^2 - 11896\eta^3$. Note that $C_1$ and $C_2$ are positive and increasing on $[\mu_0, 1]$ and $[0, \eta_0]$ respectively. Thus we may use\footnote{We note that some iteration here yields minor improvements.  Once we have a zero-free region constant in Theorem \ref{intro_new_theorem1} of $A_{0} = 1/4.9$, say, we can we apply this lemma with a marginally-improved lower bound for $\eta$ and hence with an improved $C_{2}$. We have not pursued this, since the improvements appear negligible.} (in light of $A > 1/6$)
\[
\mu > \frac{A}{A + \varepsilon}\frac{\log t}{\log (K + T_0/H) + \log t} > 1 - \frac{2.78}{\log t}\qquad\text{and}\qquad\eta > \frac{1}{6\log t}
\]
to obtain (with the aid of any symbolic algebra package)
\begin{align*}
\frac{a\kappa}{2} f(0)\log t &> C_1\left(1 - \frac{2.78}{\log t}\right) + C_2\left(\frac{1}{6\log t}\right) - 10^{-7}\\
&> 1.00582 + \frac{1.86088}{\log t} - \frac{4.4106}{(\log t)^2} - \frac{55.0584}{(\log{t})^{3}}.
\end{align*}
Since the right side is decreasing for $t \in [H, \exp(76.47)]$, we may lower-bound this expression by its value at $t = \exp(76.47)$, which is $1.02928\ldots$. Therefore
\[
\eta\log t > \frac{1.02928}{a \kappa w(0)/2} = 0.204248\ldots > A_0 + \varepsilon \ge A + \varepsilon,
\]
as required.
\end{proof}

To complete the proof of Lemma \ref{classical_main_lem} it remains to note that all zeroes $\beta + i\gamma$ with $3 \le \gamma < H$ are known to lie on the critical line \cite{platt_riemann_2021}, so Lemma \ref{classical_main_lem} follows in this region. For $\gamma \ge H$, we use Lemma \ref{classical_iteration_lem} combined with any existing zero-free region of the form $\sigma > 1 - c/\log t$ for some $c > 1/6$, e.g.\ those proved in \cite{Kadiri}, \cite{HoffTrudgian} or \cite{mossinghoff_explicit_2022}.


\end{document}